\numberwithin{equation}{section}
\theoremstyle{plain}
\newtheorem{theorem}{Theorem}[section]
\newtheorem{corollary}[theorem]{Corollary}
\newtheorem{lemma}[theorem]{Lemma}
\newtheorem{proposition}[theorem]{Proposition}
\newtheorem{conjecture}[theorem]{Conjecture}
\theoremstyle{remark}
\newtheorem{remark}{Remark}[section]
\newtheorem{example}[remark]{Example}
\begin{document}


\title{On the index of the critical M\"obius band in $\mathbb B^4$.}

\author{Vladimir Medvedev}

\address{Faculty of Mathematics, National Research University Higher School of Economics, 6 Usacheva Street, Moscow, 119048, Russian Federation}

\email{vomedvedev@hse.ru}



\begin{abstract}

In this paper we prove that the Morse index of the critical M\"obius band in the $4-$dimensional Euclidean ball $\mathbb B^4$ equals 5. It is conjectured that this is the only embedded non-orientable free boundary minimal surface of index 5 in $\mathbb B^4$. One of the ingredients in the proof is a comparison theorem between the spectral index of the Steklov problem and the energy index. The latter also enables us to give another proof of the well-known result that the index of the critical catenoid in $\mathbb B^3$ equals 4.

\end{abstract}

\maketitle


\newcommand\cont{\operatorname{cont}}
\newcommand\diff{\operatorname{diff}}

\newcommand{\dvol}{\text{dA}}
\newcommand{\Ric}{\operatorname{Ric}}
\newcommand{\GL}{\operatorname{GL}}
\newcommand{\myO}{\operatorname{O}}
\newcommand{\myP}{\operatorname{P}}
\newcommand{\eye}{\operatorname{Id}}
\newcommand{\myF}{\operatorname{F}}
\newcommand{\Vol}{\operatorname{Vol}}
\newcommand{\odd}{\operatorname{odd}}
\newcommand{\even}{\operatorname{even}}
\newcommand{\ol}{\overline}
\newcommand{\mye}{\operatorname{E}}
\newcommand{\myo}{\operatorname{o}}
\newcommand{\myt}{\operatorname{t}}
\newcommand{\irr}{\operatorname{Irr}}
\newcommand{\mydiv}{\operatorname{div}}
\newcommand{\re}{\operatorname{Re}}
\newcommand{\im}{\operatorname{Im}}
\newcommand{\can}{\operatorname{can}}
\newcommand{\scal}{\operatorname{scal}}
\newcommand{\tr}{\operatorname{trace}}
\newcommand{\sgn}{\operatorname{sgn}}
\newcommand{\SL}{\operatorname{SL}}
\newcommand{\myspan}{\operatorname{span}}
\newcommand{\mydet}{\operatorname{det}}
\newcommand{\SO}{\operatorname{SO}}
\newcommand{\SU}{\operatorname{SU}}
\newcommand{\specl}{\operatorname{spec_{\mathcal{L}}}}
\newcommand{\fix}{\operatorname{Fix}}
\newcommand{\id}{\operatorname{id}}
\newcommand{\grad}{\operatorname{grad}}
\newcommand{\singsup}{\operatorname{singsupp}}
\newcommand{\wave}{\operatorname{wave}}
\newcommand{\ind}{\operatorname{ind}}
\newcommand{\mynull}{\operatorname{null}}
\newcommand{\inj}{\operatorname{inj}}
\newcommand{\arcsinh}{\operatorname{arcsinh}}
\newcommand{\Spec}{\operatorname{Spec}}
\newcommand{\Ind}{\operatorname{Ind}}
\newcommand{\Nul}{\operatorname{Nul}}
\newcommand{\inrad}{\operatorname{inrad}}
\newcommand{\mult}{\operatorname{mult}}
\newcommand{\Length}{\operatorname{Length}}
\newcommand{\Area}{\operatorname{Area}}
\newcommand{\Ker}{\operatorname{Ker}}
\newcommand{\floor}[1]{\left \lfloor #1  \right \rfloor}

\newcommand\restr[2]{{
  \left.\kern-\nulldelimiterspace 
  #1 
  \vphantom{\big|} 
  \right|_{#2} 
  }}


\section{Introduction}

A \textit{free boundary minimal submanifold} $M$ in a Riemannian manifold $(N,g)$ with non-empty boundary is defined as a minimal submanifold whose boundary $\partial M$ lies in $\partial N$ and $M$ meets $\partial N$ orthogonally. The theory of free boundary minimal submaniflods is one of the central topics in geometric analysis. There are numerous results obtained in this direction. Without any hope to list all of them here we refer the interested reader to the survey~\cite{li2019free} and Chapter 1 of the book~\cite{fraser2020geometric}. 

In this paper we study \textit{the (Morse) index} of a free boundary minimal surface in the Euclidean ball. Roughly speaking, the index of a free boundary minimal submanifold is the maximal number of linearly independent infinitesimal variations which decrease the volume of the submanifold up to the second order while its boundary remains in the boundary of the ambient Riemannian manifold. Not much is known about the index of a free boundary minimal submanifold in the Euclidean ball. First of all, it is easy to see that the index of the plane equatorial disk in the unit $n-$dimensional Euclidean ball $\mathbb B^n$ is $n-2$. More generally, the index of an equatorial $\mathbb B^k$ in $\mathbb B^n$ is $n-k$. The first non-trivial results were obtained by Fraser and Schoen in the seminal paper~\cite{fraser2016sharp}. In this paper the authors show that the index of any free boundary minimal surface different from the plane disk in the unit $n-$dimensional Euclidean ball is at least $n$. As a matter of fact, even more general result is obtained: any $k-$dimensional free boundary minimal submanifold in $\mathbb B^n$ under certain assumption has index at least $n$ (see Theorem 3.1 in~\cite{fraser2016sharp}). Later, Sargent in~\cite{sargent2017index} and Ambrosio, Carlotto and Sharp in~\cite{ambrozio2018index} independently gave a lower bound on the index of a free boundary minimal surface in $\mathbb B^3$  in terms of the genus and the number of boundary components. Note that this estimate also works in a more general setting of mean convex domains in $\mathbb R^3$. In the case of free boundary minimal hypersurfaces in $\mathbb B^n$  it was shown in~\cite{devyver2019index} that the index is at least $n+1$. Also in~\cite[Theorems A and B]{ambrozio2018index} lower bounds on the index of a free boundary minimal hypersurface in a strictly mean convex domain in $\mathbb R^n$ were obtained.  The asymptotic of the index of $n-$dimensional critical catenoids in the unit $(n+1)-$dimensional Euclidean balls as $n\to \infty$ is studied by Smith, Stern, Tran and Zhou in the paper~\cite{smith2017morse}. 

The case of higher codimension is more complicated. To the best of our knowledge there is only one result on the index of a free boundary minimal surface in $\mathbb B^n, n\geqslant 4$. It is the result of Lima in~\cite{lima2017bounds}. In this paper the author obtains a comparison theorem between the index of a free boundary minimal surface in a general Riemannian manifold with boundary and \textit{the energy index}. Roughly speaking, the energy index is the maximal number of linearly independent infinitesimal variations  which decrease the energy of an immersed or embedded minmal submanifold up to the second order. In the case of free boundary minimal submanifolds we additionally require that the boundary of the submanifold does not leave the boundary of the ambient Riemannian manifold. In~\cite{lima2017bounds} the author also obtains inequalities involving \textit{the nullity} of a free boundary minimal submanifold which is defined as the maximal number of linearly independent infinitesimal variations on which the second variation of the volume functional vanishes.

In the aforementioned paper~\cite{fraser2016sharp} by Fraser and Schoen the authors study two important examples of free boundary minimal surfaces in the Euclidean balls: \textit{the critical catenoid} in $\mathbb B^3$ and \textit{the critical M\"obius band} in $\mathbb B^4$. Later, Devyver~\cite{devyver2019index}, Tran~\cite{tran2016index} and Smith and Zhou~\cite{smith2019morse} independently computed the index of the critical catenoid in $\mathbb B^3$. 

\begin{theorem}[Devyver~\cite{devyver2019index}, Tran~\cite{tran2016index}, Smith-Zhou~\cite{smith2019morse}]\label{cat}
The index of the critical catenoid in the $3-$dimensional Euclidean ball equals 4.
\end{theorem}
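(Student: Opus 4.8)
\emph{Proof proposal.} The plan is to diagonalise the index form of the critical catenoid $C\subset\mathbb B^{3}$ by separating the angular variable, and then to analyse the resulting one–dimensional Robin eigenvalue problems. Since the ambient metric is flat the Jacobi operator of $C$ is $L=\Delta_{C}+|A|^{2}$, and since $\partial\mathbb B^{3}$ is the unit sphere (whose second fundamental form is the identity) the index of $C$ equals the number of negative eigenvalues of
\[
Q(\varphi)=\int_{C}\bigl(|\nabla\varphi|^{2}-|A|^{2}\varphi^{2}\bigr)\,dA-\int_{\partial C}\varphi^{2}\,ds ,\qquad \varphi\in C^{\infty}(C).
\]
I would use the conformal parametrisation: up to a homothety $C$ is the image of $X(t,\theta)=(\cosh t\cos\theta,\cosh t\sin\theta,t)$ on $[-T_{*},T_{*}]\times S^{1}$, where $T_{*}>1$ is the unique positive root of $T\tanh T=1$ — this transcendental equation is exactly the condition that $C$ meet $\partial\mathbb B^{3}$ orthogonally. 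In dimension two $|\nabla\varphi|^{2}\,dA$ and $|A|^{2}\,dA$ are conformally invariant, so the bulk part of $Q$ becomes the flat–cylinder integral with potential $2/\cosh^{2}t$; only the boundary term carries the homothety, and a short computation gives $ds=\tanh T_{*}\,d\theta$ on $\partial C$, so that
\[
Q(\varphi)=\int_{[-T_{*},T_{*}]\times S^{1}}\!\!\Bigl(\varphi_{t}^{2}+\varphi_{\theta}^{2}-\tfrac{2}{\cosh^{2}t}\varphi^{2}\Bigr)dt\,d\theta\;-\;\tanh T_{*}\!\int_{S^{1}}\!\bigl(\varphi(T_{*},\theta)^{2}+\varphi(-T_{*},\theta)^{2}\bigr)d\theta .
\]

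Expanding $\varphi=\sum_{k\ge 0}\bigl(a_{k}(t)\cos k\theta+b_{k}(t)\sin k\theta\bigr)$ diagonalises $Q$ into one–dimensional forms $Q_{k}$ on $[-T_{*},T_{*}]$ with potential $k^{2}-2/\cosh^{2}t$ and Robin weight $\tanh T_{*}$ at $\pm T_{*}$, and $\ind(C)=m_{0}+2\sum_{k\ge 1}m_{k}$, where $m_{k}$ is the number of negative eigenvalues of $Q_{k}$. Two features will be exploited repeatedly: each $Q_{k}$ is invariant under $t\mapsto -t$, hence splits into an even and an odd part; and the potential increases with $k$, so $m_{k}$ is non-increasing in $k$. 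The operator is moreover exactly solvable (a Pöschl–Teller potential): $L_{0}\phi=-\phi''-2\phi/\cosh^{2}t=0$ has the solutions $\tanh t$ and $t\tanh t-1=\langle X,N\rangle$, while for $\mu<k^{2}$ the equation $-\phi''+(k^{2}-2/\cosh^{2}t)\phi=\mu\phi$ has the elementary solutions $e^{\pm\nu t}(\nu\mp\tanh t)$ with $\nu=\sqrt{k^{2}-\mu}$.

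For the lower bound $\ind(C)\ge 4$ I would write down explicit negative directions. In the $k=0$ sector $Q_{0}(1)=-6\tanh T_{*}<0$, and since $\tanh t$ solves $L_{0}\phi=0$ one gets, after integrating by parts, $Q_{0}(\tanh t)=2\tanh T_{*}\bigl(1/\cosh^{2}T_{*}-\tanh^{2}T_{*}\bigr)<0$ (the bracket equals $(1-\sinh^{2}T_{*})/\cosh^{2}T_{*}<0$ because $T_{*}>1$). As $1$ is even and $\tanh t$ odd, $Q_{0}$ is negative definite on their span, whence $m_{0}\ge 2$. In the $k=1$ sector the profile $1/\cosh t$ — the normal component $\langle e_{1},N\rangle$ of a translational Jacobi field, hence a solution of $-\phi''+(1-2/\cosh^{2}t)\phi=0$ — gives $Q_{1}(1/\cosh t)=-4\tanh T_{*}/\cosh^{2}T_{*}<0$ (all interior terms cancelling), so $m_{1}\ge 1$. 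Hence $\ind(C)\ge m_{0}+2m_{1}\ge 4$.

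The crux is the matching bound $m_{0}=2$, $m_{1}=1$, $m_{k}=0$ $(k\ge 2)$. For $k\ge 2$ a soft argument suffices: integrating $\tfrac{d}{dt}(\tanh t\cdot\phi^{2})$ over $[0,T_{*}]$ and over $[-T_{*},0]$ and using $1/\cosh^{2}t+\tanh^{2}t=1$ yields the trace bound $\tanh T_{*}\bigl(\phi(T_{*})^{2}+\phi(-T_{*})^{2}\bigr)\le\int_{-T_{*}}^{T_{*}}\bigl((\phi')^{2}+\phi^{2}\bigr)\,dt$, so $Q_{2}(\phi)\ge\int_{-T_{*}}^{T_{*}}\phi^{2}\,dt>0$, and therefore $m_{k}=0$ for all $k\ge 2$ by monotonicity. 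For $k=1$ the odd part is pinned down by the rotational Jacobi field $\psi(t)=\sinh t+t/\cosh t$, the normal component of an infinitesimal rotation of $\mathbb B^{3}$ about an axis meeting $\partial C$: it solves $-\phi''+(1-2/\cosh^{2}t)\phi=0$, it satisfies the Robin condition at $\pm T_{*}$ (here the relation $T_{*}\tanh T_{*}=1$ is used), and it is positive on $(0,T_{*}]$, hence is the ground state of the odd $Q_{1}$–problem with eigenvalue $0$; so the odd part contributes nothing and $m_{1}=m_{1}^{\mathrm{even}}$. It then remains to show that the even $k=1$ problem and the even and odd $k=0$ problems each have exactly one negative eigenvalue — in the even $k=0$ case the support function $\langle X,N\rangle=t\tanh t-1$, which vanishes at $\pm T_{*}$ precisely because $T_{*}\tanh T_{*}=1$ and has $Q_{0}(\langle X,N\rangle)=0$, plays the analogous constraining role. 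This last step is the main obstacle: soft comparison arguments appear insufficient, so I would instead substitute the explicit Pöschl–Teller solutions into the Robin conditions to reduce each of these problems to a single transcendental equation in $\nu$ and count its roots in $(0,\infty)$ by elementary analysis, the relation $T_{*}\tanh T_{*}=1$ entering decisively.
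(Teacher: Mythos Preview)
Your approach --- separate variables on the cylinder, reduce to one-dimensional Robin problems for the P\"oschl--Teller operator, and count negative eigenvalues mode by mode --- is the direct route of the original references~\cite{devyver2019index,tran2016index,smith2019morse}, and is entirely different from the paper's new proof. Your setup, the lower bound $\Ind(\mathbb K)\ge 4$, the trace inequality disposing of $k\ge 2$, and the use of the rotational Jacobi field to kill the odd $k=1$ sector are all correct; the remaining step (showing that the even $k=0$, odd $k=0$, and even $k=1$ problems each have \emph{exactly} one negative eigenvalue) you honestly flag as the main obstacle and only outline --- carrying it out via the transcendental Robin conditions is where most of the effort in the cited papers lies. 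The paper bypasses all ODE analysis. For the lower bound it invokes Theorem~\ref{main_intro}: the index form $S$ is negative definite on $\operatorname{span}\{\Omega_1,v_1^\perp,v_2^\perp,v_3^\perp\}$, where $\Omega_1=\tfrac12 e^{2\omega}b_{11}$ is the real part of $(u_{zz})^\perp$. For the upper bound it uses Corollary~\ref{cor}, $\Ind(\Sigma)\le n\,\Ind_S(\Sigma)+\dim\mathcal M(\Sigma)$: since $\mathbb K$ is immersed by first Steklov eigenfunctions one has $\Ind_S(\mathbb K)=1$, and the moduli space of conformal annuli has $\dim\mathcal M(\mathbb K)=1$, giving $\Ind(\mathbb K)\le 3\cdot 1+1=4$. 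Your route is more hands-on and yields finer spectral information (precisely which angular modes carry the index); the paper's is shorter, conceptual, and --- its real payoff --- transfers verbatim to the critical M\"obius band in $\mathbb B^4$, where a direct separation-of-variables analysis in codimension two would be considerably harder.
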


Devyver in~\cite{devyver2019index} also proved that the index of any free boundary minimal surface different from the plane disk in $\mathbb B^3$ has index at least 4 which improves the estimate of Fraser and Schoen in~\cite{fraser2011first}. It is conjectured that the critical catenoid is the only embedded free boundary minimal surface in $\mathbb B^3$ of index 4. This conjecture was partially proved in~\cite{tran2016index, devyver2019index}. Note also that the critical catenoid is conjectured to be the only free boundary minimal annulus in $\mathbb B^3$. This was partially proved in~\cite{mcgrath2016characterization,kusner2020free}. 

In this paper we compute the index of the critical M\"obius band.

\begin{theorem}\label{mobius_intro}
The index of the critical M\"obius band in the $4-$dimensional Euclidean ball equals 5. 
\end{theorem}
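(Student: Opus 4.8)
The plan is to set up the second variation (Jacobi) operator for the critical Möbius band $M\subset\mathbb B^4$ acting on sections of the normal bundle $NM$, and to decompose the problem using the large symmetry group of the critical Möbius band. Recall that the Fraser--Schoen critical Möbius band is given by an explicit conformal minimal immersion of the flat Möbius band $(\mathbb S^1\times[-T,T])/\sim$ into $\mathbb B^4$ whose coordinate functions are Steklov eigenfunctions; it carries an $O(2)$ action coming from rotations in the $\mathbb S^1$ factor together with the obvious reflection. The normal bundle splits (after passing to the orientable double cover, a flat cylinder) into line bundles on which the Jacobi operator becomes, in each Fourier mode $e^{ik\theta}$, an ODE in the $t$ variable with Robin-type (free boundary) conditions at $t=\pm T$. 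I would first carry out this Fourier decomposition carefully, tracking which modes descend to the non-orientable quotient (only the modes with the correct parity under $\theta\mapsto\theta+\pi$ combined with the bundle twist survive), and then count, mode by mode, the number of negative eigenvalues of the resulting Sturm--Liouville problems.

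Second, I would invoke the comparison theorem between the spectral (Steklov) index and the energy index advertised in the abstract: since the coordinate functions of the critical Möbius band are first Steklov eigenfunctions with Steklov eigenvalue $1$, the ambient position vector field and its relatives produce an explicit family of test variations, and the comparison theorem converts lower bounds coming from the multiplicity of the relevant Steklov/Laplace eigenvalues into a lower bound $\ind(M)\geqslant 5$. Concretely, one expects the directions spanned by (i) the two "extra" ambient rotations of $\mathbb B^4$ not tangent to the surface, (ii) translations/coordinate directions transverse to $M$, and (iii) a lowest nontrivial Fourier mode on the cylinder, to give at least five linearly independent area-decreasing variations. The matching upper bound $\ind(M)\leqslant 5$ is the analytic heart: one must show that in every Fourier mode of the Jacobi ODE the number of negative eigenvalues is exactly what the lower bound predicts, i.e. that there are no further negative directions hiding in higher modes or in the "conformal" part of the variation. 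This uses monotonicity of the eigenvalues in the Fourier parameter $k$ (a standard Sturm--Liouville comparison, since the potential term $|A|^2$ gets shifted by $+k^2/\cosh^2$-type terms, pushing eigenvalues up) to reduce to finitely many modes, which are then analyzed by hand using the explicit form of $|A|^2$ for the critical Möbius band.

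The main obstacle I anticipate is the bookkeeping of the normal bundle and the non-orientability: the critical Möbius band sits in codimension $2$, so $NM$ is a rank-$2$ bundle, and on the Möbius band it is nontrivial (one summand is the orientation line bundle). One must correctly identify how the Jacobi operator couples the two normal directions, diagonalize it if possible using the complex structure on $NM$ inherited from the conformal structure, and determine precisely which half-integer versus integer Fourier modes occur after passing to the quotient — an off-by-one error here would change the final count. A secondary subtlety is separating the genuine negative eigenvalues from the zero eigenvalues coming from the conformal Killing fields / ambient isometries (the nullity), since the free-boundary Robin condition makes the boundary contribution to the quadratic form sign-indefinite and one must be careful not to miscount a null direction as a negative one. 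Once these structural points are pinned down, the remaining step — evaluating the finitely many low-mode Sturm--Liouville problems with the explicit hyperbolic-trigonometric coefficients of the critical Möbius band and checking the eigenvalue signs — is routine if tedious, and I would organize it so that the energy-index comparison supplies the lower bound "for free" and the ODE analysis supplies only the upper bound.
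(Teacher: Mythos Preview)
Your proposal reverses the roles that the two ingredients play in the paper's proof, and this matters.

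In the paper, the comparison theorem $\Ind_E(\Sigma)\leqslant n\,\Ind_S(\Sigma)$, combined with the Lima-type inequality $\Ind(\Sigma)\leqslant \Ind_E(\Sigma)+\dim\mathcal M(\Sigma)$, supplies the \emph{upper} bound, not the lower one. Since $\mathbb M$ is realized by first Steklov eigenfunctions one has $\Ind_S(\mathbb M)=1$, and since the moduli space of conformal structures on the M\"obius band is one-dimensional, this gives $\Ind(\mathbb M)\leqslant 4\cdot 1+1=5$ with no ODE or Sturm--Liouville analysis whatsoever. Your plan to obtain the upper bound by a mode-by-mode Fourier decomposition of the Jacobi operator in codimension $2$ is in principle viable (this is how Smith--Zhou, Tran, and Devyver handled the catenoid), but it is precisely the hard analysis the paper's machinery is designed to bypass.

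Conversely, the \emph{lower} bound in the paper comes from an explicit construction of five linearly independent negative directions, and here your list has a gap. The four translation fields $v_i^\perp$ (Fraser--Schoen) are correct, but ambient rotations of $\mathbb B^4$ are isometries preserving the ball and hence generate genuine Jacobi fields with $S=0$; they contribute to the nullity, not the index. The fifth negative direction in the paper is the field $\Omega_1=\tfrac12 e^{2\omega}b_{11}=\tfrac12 u_{tt}^\perp$, the real part of $\Omega=u_{zz}^\perp$ coming from the quartic Hopf differential $\mathcal H=(\Omega\cdot\Omega)\,dz^4$. One checks on the orientable double cover (the Fraser--Sargent annulus with $k=2$, $l=1$) that $\mathcal H$ is a nonzero real constant, from which $L(\Omega_1)=8e^{-4\omega}\Omega_1$, $\nabla^\perp_\eta\Omega_1\cdot\Omega_1=0$ on the boundary, and $S(\Omega_1,v_i^\perp)=0$, so $S$ is negative definite on $\mathrm{span}\{\Omega_1,v_1^\perp,\dots,v_4^\perp\}$; finally one verifies that $\Omega_1$ is invariant under $(t,\theta)\mapsto(-t,\theta+\pi)$ and hence descends to $\mathbb M$. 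Your ``lowest nontrivial Fourier mode'' is too vague to substitute for this, and nothing in the spectral/energy comparison theorem produces a lower bound on $\Ind$.
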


The main difficulty in this computation is that the critical M\"obius band in $\mathbb B^4$ has codimension 2. In the above results mainly the case of the codimension one was considered. In this case the problem of the index estimate can be reduced to the eigenvalue problem of the stability operator on functions. In contrast to this, one has to deal directly with normal vector fields in order to estimate the index of a free boundary minimal submanifold of higher codimension. While in general this seems quite difficult to realize, the case of surfaces looks a little bit simpler since the methods of complex geometry can be used. This is what we do in order to compute the index of the critical M\"obius band. Our strategy is as follows. In order to get the lower bound on the index of the critical M\"obius band we pass to its orientable double cover. On this cover we can find five linearly independent normal vector fields which contribute to the index (see Theorem~\ref{main_intro}). In order to find these fields we use the approach of Kusner and Wang in the paper~\cite{kusner2018index}. This result is an analog of~\cite[Theorem 3.1 (1)]{kusner2018index} for the case of free boundary minimal surfaces in $\mathbb B^n$. Note that the application of complex geometry and the Hopf differentials to the theory of free boundary minimal surfaces was initiated in the paper~\cite{nitsche1985stationary} and developed in the papers~\cite{fraser2007index, fraser2015uniqueness} (see also Chapter 1 of the book~\cite{fraser2020geometric}). Further, it turns out that the found five fields descend to the critical M\"obius band, which shows that the index of the critical M\"obius band is at least 5. The upper bound is a corollary of a comparison theorem between the spectral index of the Steklov problem and the energy index (see Theorem~\ref{indexES} below). This theorem implies that the index of the critical catenoid is at most 5. Also this theorem enables us to give another proof of Theorem~\ref{cat}.




By analogy with the critical catenoid one can formulate the following conjecture

\begin{conjecture}
The critical M\"obius band is the only embedded non-orientable free boundary minimal surface in the $4-$dimensional Euclidean ball of index 5.
\end{conjecture}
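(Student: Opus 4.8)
The plan is to follow the strategy that has been partially successful for the analogous catenoid conjecture in $\mathbb B^3$: combine a sharp lower bound on the index that forces the underlying topology to be minimal with a rigidity statement in the minimal-topology case. Let $M\subset\mathbb B^4$ be an embedded non-orientable free boundary minimal surface with $\ind(M)=5$, and let $\widetilde M\to M$ be the orientable double cover with deck involution $\tau$. The first task is to show that $5$ is in fact the \emph{minimal} possible index among \emph{all} embedded non-orientable free boundary minimal surfaces in $\mathbb B^4$ — an analog for $\mathbb B^4$, restricted to the non-orientable class, of Devyver's theorem that every non-planar free boundary minimal surface in $\mathbb B^3$ has index at least $4$. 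By Theorem~\ref{mobius_intro} equality is attained by the critical M\"obius band, so once such a bound is in place the hypothesis $\ind(M)=5$ becomes an equality case, which is what one hopes to exploit. I would establish the bound by the test-function method of Fraser--Schoen: the $\binom{4}{2}=6$ rotational Killing fields of $\mathbb R^4$ restrict to admissible variations, and one checks that for a non-orientable $M$ they span a $5$-dimensional space of strictly index-decreasing directions (as for the equatorial disk the effective count is $n-k$, here $4-(-1)$ after accounting for the surviving relation). Non-orientability should be used exactly to exclude the degenerate configurations where fewer of these fields are effective.

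The second task is to upgrade this to a topological index bound that strictly exceeds $5$ unless $M$ is a topological M\"obius band. Here one would adapt the constructions of Sargent and of Ambrozio--Carlotto--Sharp to codimension two and to the non-orientable setting: working $\tau$-equivariantly on $\widetilde M$, one builds test sections of the normal bundle from harmonic $1$-forms (or meromorphic data) subordinate to the genus and to the boundary components, projects them to normal variations along $M$, and verifies the second-variation inequality. The non-orientable genus $k$ (number of crosscaps) and the number $b$ of boundary components of $M$ should enter through $\dim H^1$ of the relevant $\tau$-anti-invariant cohomology, and the goal is an estimate of the shape $\ind(M)\ge c_1 k + c_2 b$ with constants chosen so that $(k,b)=(1,1)$ is the only solution of $c_1 k + c_2 b\le 5$ with $k\ge 1$, $b\ge 1$. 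The complex-geometric bookkeeping behind the index computation for the critical M\"obius band itself — its Hopf differential and the conformal structure of $\widetilde M$ as a quotient of a twice-punctured torus — should supply the right function spaces.

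With the topology pinned down, the final task is rigidity: an embedded free boundary minimal M\"obius band in $\mathbb B^4$ of index exactly $5$ must be the critical M\"obius band. I would run the equality analysis of the bounds from the first two steps: exactness of the test-function inequalities should force the coordinate functions of $M$ to be first Steklov eigenfunctions with $\sigma_1=1$, as in Fraser--Schoen's characterization of free boundary minimal immersions of the ball, and then combine this with the fact that the Fraser--Schoen critical M\"obius band realizes the maximum of $\sigma_1 L$ among all M\"obius-band metrics, together with the complex-analytic classification of $\sigma_1$-conformal M\"obius bands. Alternatively, one could argue directly, via the Hopf-differential construction producing the five variations in Theorem~\ref{main_intro}, that index $5$ forces the Hopf differential and the normal connection to coincide with those of the critical example. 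A refined use of the comparison Theorem~\ref{indexES}, relating the energy index to the Steklov spectral data, is likely to be the mechanism that converts the index hypothesis into a statement about $\sigma_1$.

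The main obstacle is this last rigidity step. Even for the critical catenoid the corresponding uniqueness statement — that it is the only free boundary minimal annulus in $\mathbb B^3$, or the only one of index $4$ — remains open in full generality and has been proved only under extra symmetry or pinching hypotheses. In codimension two the situation is more delicate: there is at present no uniqueness theorem for free boundary minimal M\"obius bands in $\mathbb B^4$ to invoke, so a self-contained proof would have to produce such uniqueness as a byproduct of the index constraint. A secondary difficulty is calibrating the constants in the topological bound of the second step: the codimension-two normal bundle and the non-orientable twisting make the naive test-section count weaker than in the hypersurface case, and genuinely new input — beyond a direct transcription of the $\mathbb B^3$ arguments — will probably be needed to reach a bound strong enough to isolate $(k,b)=(1,1)$.
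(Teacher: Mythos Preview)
The statement you are attempting to prove is a \emph{conjecture} in the paper, not a theorem: the paper offers no proof and explicitly presents it as an open problem, by analogy with the still-open conjecture that the critical catenoid is the unique embedded free boundary minimal surface of index $4$ in $\mathbb B^3$. There is therefore no paper proof to compare your proposal against. What you have written is a research outline, and you yourself correctly identify the decisive obstruction in your final two paragraphs: the rigidity step would require, as a byproduct, a uniqueness theorem for free boundary minimal M\"obius bands in $\mathbb B^4$ that does not currently exist, and the analogous statement one dimension down is already a well-known open problem.

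Beyond that structural issue, the first step of your outline contains a concrete error. Rotational Killing fields of $\mathbb R^4$ generate isometries of $\mathbb B^4$; their normal components along any free boundary minimal surface are Jacobi fields satisfying the natural boundary condition, and hence lie in the \emph{nullity}, not the index. They do not produce negative directions for $S$. The negative directions the paper (and Fraser--Schoen) actually use come from the normal projections $v^\perp$ of \emph{constant} vectors $v\in\mathbb R^4$, which are not Killing for the ball; these give exactly $n=4$ independent negative directions for any non-disk surface, and the fifth direction for the critical M\"obius band is the field $\Omega_1$ built from the Hopf differential, not anything coming from ambient symmetries. Your count ``$4-(-1)$'' has no meaning that I can reconstruct. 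So even the proposed general lower bound $\ind(M)\ge 5$ for arbitrary embedded non-orientable $M\subset\mathbb B^4$ is not established by your argument, and no such bound is proved in the paper either.
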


\subsection{Discussion}

It is well known that the theory of closed minimal submanifolds in the standard sphere is closely related to the geometric optimization of eigenvalues of \textit{the Laplace-Beltrami operator} (see for example the surveys~\cite{penskoi2013extremal, penskoi}). In the same spirit the theory of free boundary minimal submanifolds in the unit Euclidean ball is related to the geometric optimization of eigenvalues of \textit{the Steklov problem} as it was first discovered by Fraser and Schoen in the papers~\cite{fraser2011first, fraser2016sharp}. In order to define this problem we will assume that $(M,g)$ is a Riemannian manifold with non-empty Lipshitz boundary.  Then the Steklov problem is the following eigenvalue problem
$$
\begin{cases}
\Delta_g u=0~\text{in $M$},\\
\partial_\eta u=\sigma u~\text{on $\partial M$},
\end{cases}
$$ 
where $u\in C^\infty(M), ~\Delta_g$ is the Laplace-Beltrami operator of the metric $g$ and $\eta$ is the outward unit normal field to the boundary. The real numbers $\sigma$ such that the Steklov problem admits non-trivial solutions are called \textit{Steklov eigenvalues}. The corresponding solutions $u$ are called \textit{Steklov eigenfunctions}.  We refer the interested reader to the survey~\cite{girouard2017spectral} for more information about the Steklov problem. Here we mention that any free boundary minimal submanifold in $\mathbb B^n$ is given by Steklov eigenfunctions with eigenvalue 1.  

Recently, Karpukhin and Metras in~\cite{karpukhin2021laplace} studied the $n-$harmonic maps and introduced the notion of the spectral index of a Riemannian manifold with boundary as the number of Steklov eigenvalues not exceeding 1. Previously, the spectral index of a closed Riemannian surface was introduced in~\cite{montiel1991schrodinger} and studied in~\cite{karpukhin2021index}. In the latter paper a comparison theorem between the spectral index and the energy index was obtained. If we denote the spectral index of a surface $\Sigma$ as $\Ind_S(\Sigma)$ and the energy index as $\Ind_E(\Sigma)$ then the following theorem holds

\begin{theorem}\label{indexES}
Let $u\colon \Sigma \to \mathbb B^n$ be a free boundary minimal immersion of a surface $\Sigma$ into $n-$dimensional Euclidean ball. Then
$$
\Ind_E(\Sigma)\leqslant n\Ind_S(\Sigma).
$$
\end{theorem}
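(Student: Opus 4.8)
The plan is to relate the second variation of energy for the minimal immersion $u\colon\Sigma\to\mathbb B^n$ to the quadratic form of the Steklov problem on $\Sigma$, and then exploit the coordinate structure of $\mathbb B^n\subset\mathbb R^n$ to split an energy-decreasing space into $n$ pieces each controlled by a Steklov-type form. Concretely, I would first recall that since $u$ is a free boundary minimal immersion, each coordinate function $u^i$ is harmonic on $\Sigma$ and satisfies the Steklov condition $\partial_\eta u^i=u^i$ on $\partial\Sigma$, so the pullback metric is conformal to the intrinsic metric and the energy functional is conformally invariant in dimension two. The energy index $\Ind_E(\Sigma)$ counts the maximal dimension of a subspace of variation vector fields (sections of $u^*T\mathbb B^n$ tangent to $\partial\mathbb B^n$ along the boundary) on which the second variation of energy $Q_E$ is negative definite.

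The key step is the following decomposition argument. Let $V\subset\Gamma(u^*T\mathbb R^n)$ be a subspace of dimension $\Ind_E(\Sigma)$ on which $Q_E$ is negative definite, with the admissible boundary constraint. Writing a vector field $W\in V$ in the ambient coordinates as $W=(W^1,\dots,W^n)$, the second variation form for the energy of the (componentwise harmonic) map decouples across the Euclidean coordinates up to the boundary term, so that $Q_E(W,W)=\sum_{i=1}^n q(W^i,W^i)$, where $q$ is a single scalar Dirichlet-type form with the Steklov boundary penalty, i.e. $q(f,f)=\int_\Sigma|\nabla f|^2\,dA-\int_{\partial\Sigma}f^2\,ds$ after using the free boundary / conformality relations. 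The negative-definiteness of $Q_E$ on $V$ forces, for each $W\in V\setminus\{0\}$, at least one coordinate $i$ with $q(W^i,W^i)<0$. I would then argue that $V$ cannot contain a subspace of dimension $> n\,\Ind_S(\Sigma)$: if it did, then by a pigeonhole/linear-algebra argument (projecting onto the $i$-th coordinate and intersecting with the non-negative cone of $q$) one could produce a nonzero $W\in V$ all of whose coordinates $W^i$ lie in the subspace where $q\geqslant 0$ — the span of the Steklov eigenfunctions with eigenvalue $\geqslant1$ — contradicting negativity of $Q_E(W,W)$. The count $\Ind_S(\Sigma)$ is exactly $\dim$ of the $q$-negative space (Steklov eigenvalues $<1$), which is why the bound comes out as $n\,\Ind_S(\Sigma)$.

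The main obstacle I anticipate is making the decoupling $Q_E(W,W)=\sum_i q(W^i,W^i)$ genuinely clean: the second variation of energy naturally contains a term involving the second fundamental form of $\mathbb B^n\hookrightarrow\mathbb R^n$ restricted to $\partial\mathbb B^n$ (the boundary of the ball is umbilic, which is what converts the boundary integral into the $-\int_{\partial\Sigma}f^2$ penalty), and one must check that the constraint "$W$ tangent to $\partial\mathbb B^n$ along $\partial\Sigma$" does not obstruct treating the coordinates independently for the dimension count. I would handle this by working with the \emph{ambient} (unconstrained in $\mathbb R^n$) energy form first, where the splitting is exact, observe that the free-boundary second variation is the restriction of this ambient form to the admissible subspace, and note that passing to a subspace only decreases the index — so the inequality $\Ind_E(\Sigma)\leqslant n\,\Ind_S(\Sigma)$ is inherited. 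The remaining care is purely linear-algebraic: a subspace on which a sum of $n$ quadratic forms is negative definite has dimension at most $n$ times the maximal dimension on which a single summand is negative, which follows by an iterated application of the fact that $\dim(A\cap B)\geqslant\dim A+\dim B-\dim(\text{ambient})$.
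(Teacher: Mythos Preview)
Your approach is essentially the paper's: decompose $S_E(X,X)=\sum_{i=1}^n S_S(X^i,X^i)$ on harmonic fields and then, assuming $\Ind_E>n\Ind_S$, use a projection/dimension count to produce a nonzero $X$ in the negative space of $S_E$ whose components $X^i$ all lie in the nonnegative space of $S_S$, a contradiction. Your concern about the tangency constraint along $\partial\mathbb B^n$ is unnecessary here, since in the paper $S_E$ is defined on \emph{all} $\mathbb R^n$-valued fields (the boundary term $-\int_{\partial\Sigma}|\nabla_\eta u|\,|X|^2=-\int_{\partial\Sigma}|X|^2$ already encodes the free-boundary condition), so the coordinate splitting is immediate and no ``passing to a subspace'' step is needed.
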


This theorem is the second ingredient in the proof of Theorem~\ref{mobius_intro}. However, we believe that Theorem~\ref{indexES} could be of independent interest. 

We finish the discussion with the following two theorems. The first one was inspired by~\cite[Lemma 7.1]{devyver2019index},~\cite[Theorem 3.8]{tran2016index} (see also~\cite{fraser2020geometric})

\begin{theorem}\label{+n}
Let $\Sigma$ be a non-flat free boundary minimal hypersurface in $\mathbb B^n$. Then one has
$$
\Ind(\Sigma) \geqslant \Ind_S(\Sigma)+n.
$$
\end{theorem}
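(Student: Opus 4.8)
The plan is to build, for each Steklov eigenfunction with eigenvalue $\leqslant 1$, an associated test function for the Jacobi (stability) operator of $\Sigma$, and to combine these with the $n$ coordinate-type test functions that already appear in the Fraser--Schoen and Devyver--Tran circle of ideas. Since $\Sigma$ is a hypersurface, its Jacobi operator acts on functions (sections of the trivial normal bundle spanned by the unit normal $\nu$), and the quadratic form is $Q(\varphi)=\int_\Sigma |\nabla\varphi|^2-(|A|^2)\varphi^2\,\dvol - \int_{\partial\Sigma}\varphi^2\,ds$, where the boundary term comes from the free boundary condition and the fact that the second fundamental form of $\partial\mathbb B^n$ is the identity (this is the standard form of the second variation, cf.\ Fraser--Schoen). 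So $\Ind(\Sigma)$ is the index of $Q$ on $C^\infty(\Sigma)$.

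First I would recall the two known families of negative directions. The first is the family of functions of the form $\langle x,a\rangle$ for $a\in\mathbb R^n$ a constant vector: restricted to a free boundary minimal hypersurface these satisfy $\Delta_\Sigma\langle x,a\rangle = 0$ (harmonicity of coordinate functions on a minimal submanifold) together with $\partial_\eta\langle x,a\rangle = \langle x,a\rangle$ on $\partial\Sigma$, and one computes $Q(\langle x,\nu\rangle$-type)$<0$; more precisely the relevant test functions here are $u_a=\langle \nu,a\rangle$, which satisfy the Jacobi equation up to the right sign and give the $+n$ contribution exactly as in~\cite[Lemma 7.1]{devyver2019index} and~\cite[Theorem 3.8]{tran2016index}. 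The key point is that these $n$ functions $\langle\nu,a\rangle$, $a\in\mathbb R^n$, span an $n$-dimensional space (using that $\Sigma$ is non-flat, so the Gauss map is non-constant) on which $Q$ is negative definite — this is precisely the argument behind the Fraser--Schoen lower bound $\Ind\geqslant n$.

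Next I would produce $\Ind_S(\Sigma)$ further independent negative directions from Steklov data. Let $\sigma_0=0<\sigma_1\leqslant\dots\leqslant\sigma_{k-1}\leqslant 1$ be the Steklov eigenvalues $\leqslant 1$ (so $k=\Ind_S(\Sigma)$), with harmonic eigenfunctions $\phi_0,\dots,\phi_{k-1}$. Each $\phi_j$ is harmonic in $\Sigma$ and satisfies $\partial_\eta\phi_j=\sigma_j\phi_j\leqslant \phi_j$ on $\partial\Sigma$ (with the appropriate sign bookkeeping), so plugging $\phi_j$ into $Q$ and integrating by parts gives
$$
Q(\phi_j)=\int_\Sigma |\nabla\phi_j|^2\,\dvol-\int_\Sigma |A|^2\phi_j^2\,\dvol-\int_{\partial\Sigma}\phi_j^2\,ds
=\int_{\partial\Sigma}(\sigma_j-1)\phi_j^2\,ds-\int_\Sigma |A|^2\phi_j^2\,\dvol\leqslant 0,
$$
with strict inequality unless $\sigma_j=1$ and $|A|^2\equiv 0$, the latter excluded by non-flatness. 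So each $\phi_j$ is a genuine negative direction.

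The hard part — and the main obstacle — is showing that the span of $\{\langle\nu,a\rangle : a\in\mathbb R^n\}$ and the span of $\{\phi_0,\dots,\phi_{k-1}\}$ together form a $(k+n)$-dimensional subspace of $C^\infty(\Sigma)$ on which $Q$ is negative definite, i.e.\ that the two families are ``jointly negative'' and linearly independent. Linear independence should follow by distinguishing the families through their boundary behaviour versus interior/Gauss-map behaviour, but the delicate issue is that $Q$ need not be negative definite on an arbitrary sum of two negative-definite subspaces unless one controls the cross terms. The natural remedy is to work with the symmetric bilinear form associated to $Q$ and estimate the off-diagonal pairings $Q(\langle\nu,a\rangle,\phi_j)$; integration by parts reduces these to boundary integrals involving $\sigma_j$ and the normal derivative of $\langle\nu,a\rangle$ on $\partial\Sigma$, and one shows the resulting Gram matrix of the combined system is negative definite, perhaps after orthogonalising the $\phi_j$ against the coordinate directions. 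Alternatively — and this is probably cleaner — I would invoke the variational characterisation of the index directly: it suffices to exhibit a $(k+n)$-dimensional subspace $W$ with $Q|_W<0$, and one can take $W$ to be spanned by the $\phi_j$ together with suitable modifications $\langle\nu,a\rangle-\pi(\langle\nu,a\rangle)$ where $\pi$ is the $L^2(\partial\Sigma)$-projection onto $\myspan\{\phi_j|_{\partial\Sigma}\}$, arranging that the modified coordinate directions have the ``same'' negative $Q$-value (the interior $|A|^2$ term dominates and the boundary correction only helps) while being $Q$-orthogonal to the $\phi_j$ by construction. Carrying out this orthogonalisation carefully, and checking that the modifications remain linearly independent and do not destroy the negativity inherited from the Fraser--Schoen argument, is the technical core of the proof.
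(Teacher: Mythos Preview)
Your instinct that the Steklov eigenfunctions $\phi_j$ give $\Ind_S(\Sigma)$ negative directions is exactly right, and that part of the computation is correct. The gap is in the choice of the remaining $n$ directions and the handling of the cross terms. You take the Gauss-map functions $\langle\nu,a\rangle$, which are Jacobi fields but not harmonic; this forces you into the orthogonalisation gymnastics you describe, and the proposed fix does not work: projecting $\langle\nu,a\rangle$ in $L^2(\partial\Sigma)$ onto the complement of $\operatorname{span}\{\phi_j|_{\partial\Sigma}\}$ kills only the boundary pairing, whereas $Q(\phi_i,\psi)= -\int_\Sigma |A|^2\phi_i\psi + (\sigma_i-1)\int_{\partial\Sigma}\phi_i\psi$ still carries the interior term $-\int_\Sigma |A|^2\phi_i\psi$, which you have no control over. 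So the cross terms do not vanish and you are back to the original difficulty of showing a sum of two negative subspaces is negative.

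The paper sidesteps this entirely by a different, and much simpler, choice of the $n$ extra directions: instead of $\langle\nu,a\rangle$ it uses the \emph{coordinate functions} $u_1,\dots,u_n$ themselves. These are harmonic on $\Sigma$ and are Steklov eigenfunctions with eigenvalue exactly $1$. Now the whole test space $V=\operatorname{span}\{\varphi_1,\dots,\varphi_k,u_1,\dots,u_n\}$ consists of Steklov eigenfunctions; the $\varphi_i$ (eigenvalues $\sigma_i<1$) are automatically $L^2(\partial\Sigma)$-orthogonal to the $u_j$ (eigenvalue $1$), and one may take the $\varphi_i$ orthonormal among themselves. For any $\psi=\sum_i\alpha_i\varphi_i+\sum_j\beta_j u_j\in V$ one has $\Delta_g\psi=0$, and the boundary piece of the index form collapses to
\[
\int_{\partial\Sigma}\Big(\frac{\partial\psi}{\partial\eta}-\psi\Big)\psi\,ds_g=\sum_i\alpha_i^2(\sigma_i-1)\leqslant 0,
\]
since the $u_j$-block contributes identically in both terms and cancels. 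Together with $-\int_\Sigma|B|^2\psi^2<0$ (non-flatness) this gives $S(\psi,\psi)<0$ for every nonzero $\psi\in V$. Linear independence is immediate because eigenspaces for different Steklov eigenvalues are disjoint and the $u_j$ are independent when $\Sigma$ is non-flat. No Gram-matrix estimate or orthogonalisation is needed.
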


The second theorem was first proved in the paper~\cite{lima2017bounds} (see Theorem 1 therein) for the case of orientable free boundary minimal surfaces. The proof provided below in Section~\ref{appendix} is based on the original ideas by Fraser and Schoen in the paper~\cite{fraser2016sharp} (see Propositions 6.5 and 7.3 therein) and works for non-orientable  free boundary minimal surfaces as well as for orientable ones.
 
\begin{theorem}\label{lima}
Let $\Sigma$ be a (orientable or non-orientable) free boundary minimal surface in $\mathbb B^n$. Then
$$
\Ind(\Sigma) \leqslant \Ind_E(\Sigma)+\dim \mathcal M(\Sigma).
$$
where $\mathcal M(\Sigma)$ is the moduli space of conformal structures on $\Sigma$.
\end{theorem}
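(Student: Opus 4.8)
The plan is to compare the variational problem that defines $\Ind(\Sigma)$ — the second variation of area among variations keeping $\partial\Sigma$ in $\partial\mathbb B^n$ — with the variational problem defining $\Ind_E(\Sigma)$, namely the second variation of the energy (Dirichlet) functional under the same boundary constraint. Since $u\colon\Sigma\to\mathbb B^n$ is a free boundary minimal immersion, it is in particular a harmonic map with free boundary, and for surfaces the energy functional is conformally invariant, so the two functionals agree to second order \emph{after} we account for the freedom to vary the conformal structure. Concretely, I would first set up the index form: write a normal variation field as $V$, with $\partial_\eta$-component tangent to $\partial\mathbb B^n$ along $\partial\Sigma$, and recall (as in Fraser--Schoen, Propositions 6.5 and 7.3 of~\cite{fraser2016sharp}) that the area second variation $Q_A(V,V)$ and the energy second variation $Q_E(V,V)$ differ by a term that, on a minimal surface, is controlled by the real part of a holomorphic quadratic differential — the Hopf-type differential associated to the deformation. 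The key identity is that $Q_A(V,V) - Q_E(V,V)$ depends only on the induced infinitesimal change of conformal structure, which lives in a space of dimension $\dim\mathcal M(\Sigma)$ (with the appropriate boundary conditions: holomorphic quadratic differentials real on $\partial\Sigma$, i.e. Teichm\"uller space of a surface with boundary, and in the non-orientable case the invariant subspace under the orientation-reversing involution on the double cover).

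The second step is the linear-algebra argument. Let $W$ be a subspace of admissible variation fields on which $Q_A$ is negative definite, with $\dim W = \Ind(\Sigma)$. Consider the linear map $\Phi\colon W \to \mathcal{T}$ sending a variation to its induced infinitesimal change of conformal structure, where $\dim\mathcal{T} = \dim\mathcal M(\Sigma)$. On $\ker\Phi$ the two index forms coincide (the correction term vanishes), so $Q_E$ is negative definite on $\ker\Phi$, whence $\dim\ker\Phi \leqslant \Ind_E(\Sigma)$. By rank--nullity, $\Ind(\Sigma) = \dim W = \dim\ker\Phi + \dim\mathrm{im}\,\Phi \leqslant \Ind_E(\Sigma) + \dim\mathcal M(\Sigma)$, which is the claimed inequality. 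The non-orientable case requires care only in that all variation fields, differentials, and the moduli space must be taken on the orientable double cover $\widetilde\Sigma$ and restricted to the $(\pm)$-eigenspaces of the deck involution; one checks that the correction term respects this splitting and that $\dim\mathcal M(\Sigma)$ is exactly the dimension of the relevant eigenspace of holomorphic quadratic differentials on $\widetilde\Sigma$.

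The main obstacle, and the part deserving the most care, is establishing precisely that $Q_A(V,V) = Q_E(V,V)$ on the subspace of variations inducing no change of conformal structure, and more importantly the \emph{boundary} version of this statement — that the free boundary condition $V|_{\partial\Sigma}$ tangent to $\partial\mathbb B^n$ is compatible on both sides and that the boundary integrals in the two second-variation formulas match up. This is where the structure of the critical immersion (the fact that $\Sigma$ meets $\partial\mathbb B^n$ orthogonally and that the position vector gives the conormal) enters, essentially reproducing the computations of~\cite{fraser2016sharp} but tracking the conformal-deformation term; I would carry this out by choosing conformal coordinates adapted to the boundary (so that near $\partial\Sigma$ the metric is a boundary half-disk model) and integrating by parts, reducing the discrepancy to a pairing between the variation and a holomorphic quadratic differential real along $\partial\Sigma$. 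Once that identity is in hand, the rank--nullity step is immediate and the theorem follows.
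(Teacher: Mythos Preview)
Your overall architecture is right and matches the paper's: pass to a codimension-$\dim\mathcal M(\Sigma)$ subspace of the negative space of $S$ singled out by an obstruction living in the holomorphic quadratic differentials (real on $\partial\Sigma$, $\iota$-invariant in the non-orientable case), and conclude by rank--nullity. But the mechanism you propose for what happens on that subspace is not correct, and this is a genuine gap.

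Concretely, for a \emph{normal} field $\xi$ one computes directly (using $|\nabla\xi|^2=|\nabla^\perp\xi|^2+|(\nabla\xi)^\top|^2$ and $(\nabla_{e_i}\xi)^\top=-\sum_j\langle\xi,b_{ij}\rangle e_j$) that
\[
S_E(\xi,\xi)-S(\xi,\xi)=2\int_\Sigma\sum_{i,j}\langle\xi,b_{ij}\rangle^2\,dv_g
= 2\int_\Sigma\langle\mathcal B(\xi),\xi\rangle\,dv_g\;\geq 0,
\]
which is (up to a constant) the $L^2$ norm of the Beltrami differential $\mu_\xi$ induced by $\xi$. This quantity vanishes only when $\mu_\xi=0$ \emph{pointwise}, not when $\mu_\xi$ is merely cohomologically trivial, i.e.\ when $\xi\in\ker\Phi$. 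So your ``key identity'' that $Q_A(V,V)-Q_E(V,V)$ depends only on the Teichm\"uller class of the induced deformation, and hence vanishes on $\ker\Phi$, is false.

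What the paper actually does on $\ker\Phi$ is different: membership in $\ker\Phi$ is exactly the solvability condition (Fredholm alternative) for the boundary $\bar\partial$-problem
\[
D^{1,0}X^{0,1}=-(\nabla^{1,0}\xi)^\top\quad\text{in }\Sigma,\qquad \re X^{0,1}=0\ \text{on }\partial\Sigma,
\]
whose cokernel is precisely the holomorphic quadratic differentials real on the boundary. Solving it produces a \emph{tangent} field $X$ such that $Y=X+\xi$ is a conformal vector field, and the Fraser--Schoen lemma then gives $S_E(Y,Y)=S(Y^\perp,Y^\perp)=S(\xi,\xi)$. One then gets a (linear, injective) assignment $\xi\mapsto Y$ on a subspace $U\subset V$ of codimension at most $\dim\mathcal M(\Sigma)$, yielding $\Ind_E(\Sigma)\geq\dim U\geq\Ind(\Sigma)-\dim\mathcal M(\Sigma)$. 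The point you are missing is this tangential correction: $S_E$ must be evaluated on the corrected field $Y$, not on $\xi$ itself. Once you insert that step, your outline becomes the paper's proof.
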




With Theorem~\ref{lima} Theorems~\ref{+n} and~\ref{indexES} imply a two-sided inequality on the energy and spectral indices. A similar two-sided inequality on the energy and spectral indices of a closed minimal surface in $\mathbb S^n$ was successfully used in~\cite{karpukhin2021index}. It would be interesting to obtain similar results for higher dimensional free boundary minimal submanifolds in $\mathbb B^n$.

\subsection{Plan of the paper} The paper is organized in the following way. Section~\ref{notations} contains the notation and definitions that we use throughout the paper. In Section~\ref{preliminaries} we recall some facts about free boundary minimal surfaces in the Euclidean balls. Section~\ref{tech} contains a technical background useful for the consequent sections. In Section~\ref{proof} we prove some auxiliary theorem (Theorem~\ref{main_intro}) which will later enable us to estimate the index of the critical catenoid in $\mathbb B^4$ from below. Here we also consider the case of Fraser-Sargent surfaces (see Theorem~\ref{indFS}). In Section~\ref{ES} we give the proofs of Theorems~\ref{indexES} and~\ref{+n} and deduce Corollary~\ref{cor} which we use in the following section. Section~\ref{mobius} contains the proof of Theorem~\ref{mobius_intro} and in Subsection~\ref{Cat} we give another proof of Theorem~\ref{cat}. Finally, in Section~\ref{appendix} we prove Theorem~\ref{lima}.

\subsection*{Acknowledgements} The author is deeply indebted to Mikhail Karpukhin for bringing his attention to the theory of free boundary minimal submanifolds as well as for numerous and fruitful discussions. The author is also grateful to Misha Verbitsky and Alexei Penskoi for useful discussions. The author would like to thank Iosif Polterovich for valuable remarks on the preliminary version of the manuscript. The author is also grateful to the reviewer for valuable remarks and suggestions. During the work on the paper the author was partially supported by the Simons-IUM fellowship, by the contest "Young Russian Mathematics" and by the Theoretical Physics and Mathematics Advancement Foundation "BASIS". 

\section{Notation and definitions} \label{notations}

Throughout the paper we use the following notation and definitions.  
 
\begin{itemize}
\item $\mathbb B^n$ is the unit ball centred at the origin in the Euclidean space $\mathbb R^n$; 
\item $\Sigma$ is a free boundary minimal surface in $\mathbb B^n$ given by the free boundary immersion $u\colon \Sigma \to \mathbb B^n$; 
\item $-\cdot-$ denotes the standard Euclidean dot-product;
\item $\langle-,-\rangle$ and $g$ denote the scalar product and the metric induced on $\Sigma$;
\item $\Gamma(N\Sigma), \Gamma(T\Sigma)$ denote the sections of the normal bundle $N\Sigma$ and the tangent bundle $T\Sigma$ over $\Sigma$ respectively;
\item for any vector $v\in \mathbb R^n$ $v^\perp$ denotes the projection onto $\Gamma(N\Sigma)$ and $v^\top$ is the projection onto $\Gamma(T\Sigma)$;
\item $\nabla^\perp$ is the connection in $N\Sigma$ and $\nabla^\top$ is the connection in $T\Sigma$; the covariant derivative on $\mathbb R^n$ is denoted by $\nabla$;
\item the Laplacian on the normal bundle is defined by 
$$
\Delta^\perp X=\sum_{i=1}^2\left(\nabla^\perp_{e_i}\nabla^\perp_{e_i} X-\nabla^\perp_{(\nabla_{e_i}e_i)^\top}X\right),~\forall X\in \Gamma(N\Sigma),
$$
here $e_1,e_2$ is a local orthonormal basis in $\Gamma(T\Sigma)$; 
\item the second fundamental form of $\Sigma$ is given by $B(X,Y)=(\nabla_XY)^\perp, \forall X,Y \in \Gamma(T\Sigma)$, particularly, $b_{ij}=B(e_i,e_j)$.
\item the Simons operator on $X\in \Gamma(N\Sigma)$ is defined as $\mathcal B(X)=\sum_{i,j=1}^2(b_{ij}\cdot X)b_{ij}$;
\item the Jacobi operator on $X\in \Gamma(N\Sigma)$ is given by the formula 
$$
L(X)=\Delta^\perp X+\mathcal B(X);
$$
\item the second variation of the area of $\Sigma$ towards the direction $X\in \Gamma(N\Sigma)$ is the following quadratic form:
$$
S(X,X)=-\int_\Sigma \langle L(X), X\rangle dv_g+\int_{\partial \Sigma} \left(\langle X,\nabla^\perp_\eta X\rangle-|X|^2\right)ds_g,
$$
here $|X|^2=\langle X,X\rangle$ and $\eta$ is the outward unit normal field to the boundary;
\item the (Morse) index $\Ind(\Sigma)$ is the maximal dimension of a vector subspace $V\subset \Gamma(N\Sigma)$ on which $S$ is negative-definite;
\item $\Nul(\Sigma)$ is the nullity of $\Sigma$ which is defined as the maximal dimension of a vector subspace $V\subset \Gamma(N\Sigma)$ on which $S$ vanishes;
\item the second variation of the energy of $\Sigma$ towards the direction $X\in \mathbb R^n$ is the following quadratic form:
$$
S_E(X,X)=\int_\Sigma |\nabla X|^2 dv_g-\int_{\partial \Sigma} |\nabla_\eta u||X|^2ds_g;
$$
\item the energy (Morse) index $\Ind_E(\Sigma)$ is the maximal dimension of a vector subspace $V\subset \Gamma(T\mathbb R^n)$ on which $S_E$ is negative-definite; notice that in the problem of the energy index estimates from below it suffices to consider harmonic vector fields since they have least energy; 
\item the spectral index is defined as the number of negative eigenvalues of the following operator
$$
L^S(\varphi)=\eta \hat\varphi-|\nabla_\eta u|\varphi,~\forall \varphi \in C^\infty(\partial \Sigma),
$$
where $\hat\varphi$ denotes the harmonic continuation of $\varphi$ (for details see~\cite{karpukhin2021laplace}); the corresponding quadratic form is denoted as
$$
S_S(\varphi,\varphi)=\int_\Sigma|\nabla \hat\varphi|^2dv_g-\int_{\partial \Sigma} |\nabla_\eta u|\varphi^2ds_g;
$$
\item $\mathbb K$ is the critical catenoid
which is defined as the image of the following free boundary minimal map:
$$
u\colon [-T_K,T_K]\times \mathbb S^1 \to \mathbb B^3, 
$$ 
where $u(t,\theta)=\frac{1}{r}(\cosh t\cos\theta, \cosh t\sin \theta,t)$, $T_K$ is the unique positive solution of the equation $\coth t=t$ and $r=\sqrt{\cosh^2T_K+T^2_K}$ (see~\cite{fraser2016sharp});
\item $\mathbb M$ is the critical M\"obius band which is defined as the image of the following free boundary minimal map:
$$
u\colon [-T_M,T_M]\times \mathbb S^1/\sim \to \mathbb B^4, 
$$ 
where $u(t,\theta)=(2\sinh t\cos\theta, 2\sinh t\sin \theta, \cosh 2t\cos 2\theta, \cosh 2t\sin 2\theta)$ and $T_M$ is the unique positive solution of the equation $\coth t=2\tanh 2t$, $\sim$ is the following equivalence relation $u(t,\theta)\sim u(-t,\theta+\pi)$ (see~\cite{fraser2016sharp});
\item $\mathbb K_q$ is the critical $q-$catenoid which is defined as the image of the following free boundary minimal immersion:
$$
u\colon [-t_{1,0}/q,t_{1,0}/q]\times \mathbb S^1 \to \mathbb B^3, 
$$ 
where $u(t,\theta)=\frac{1}{r_q}(\cosh (qt)\cos(q\theta), \cosh (qt)\sin (q\theta),qt)$, $q$ is a natural number, $r_q=\sqrt{\cosh^2(qt_{1,0})+t^2_{1,0}}$ and $t_{1,0}$ is the unique positive solution of $\coth t=t$ (see~\cite{fraser2016sharp,fraser2021existence}). Note that $\mathbb K_1=\mathbb K$;
\item the Fraser-Sargent annuli (see~\cite{fan2015extremal,fraser2021existence}) in $\mathbb B^4$ are defined as 
$$
u\colon [-t_{k,l},t_{k,l}]\times \mathbb S^1 \to \mathbb B^4, 
$$
where
\begin{gather*}
u(t,\theta)=\\ \frac{1}{r_{k,l}}(k\sinh(lt)\cos(l\theta),k\sinh(lt)\sin(l\theta),l\cosh(kt)\cos(k\theta),l\cosh(kt)\sin(k\theta)),
\end{gather*}
$k,l \in \mathbb N$ with $k>l$, $r_{k,l}=\sqrt{k^2\sinh^2(lt_{k,l})+l^2\cosh^2(kt_{k,l})}$ and $t_{k,l}$ is the unique positive solution of $k\tanh(kt)=l\coth(lt)$. 
\end{itemize}

\section{Preliminaries}\label{preliminaries}

In this section we collect some known facts about free boundary minimal surfaces in the Euclidean ball which we use in the subsequent sections.

\begin{theorem}[Fraser-Schoen~\cite{fraser2016sharp}]\label{FS}
Let $v\in \mathbb R^n\setminus \{0\}$. Then for the second variation of the area of $\Sigma$ towards $v^\perp$ one has
$$
S(v^\perp,v^\perp)=-2\int_\Sigma |v^\perp|^2dv_g.
$$
Moreover, if $\Sigma$ is not a plane disk and  $v_1\perp v_2$ then $S(v_1^\perp,v_2^\perp)=0$. Particularly, if $\Sigma$ is not a plane disk then $\Ind(\Sigma) \geqslant n$.
\end{theorem}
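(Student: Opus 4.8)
\emph{Step 1 (reduction to a boundary integral).} The identity is, in essence, a reflection of the fact that Euclidean translations are ambient isometries, read against the free boundary condition $\eta=u$ on $\partial\Sigma$. For a fixed $v$ the surfaces $\Sigma+tv$ are all minimal, being Euclidean translates of $\Sigma$, so the normal part $v^\perp$ of the generating field is a Jacobi field: $L(v^\perp)=\Delta^\perp v^\perp+\mathcal B(v^\perp)=0$ on $\Sigma$. (Alternatively: $\nabla_{e_i}v=0$ together with minimality gives $\nabla^\perp_{e_i}v^\perp=-B(e_i,v^\top)$, and the Codazzi equation turns $\Delta^\perp v^\perp$ into $-\mathcal B(v^\perp)$.) Substituting $L(v^\perp)=0$ into the definition of $S$ annihilates the interior term, and since $\langle v^\perp,\nabla^\perp_\eta v^\perp\rangle=\tfrac12\partial_\eta|v^\perp|^2$, there remains
$$
S(v^\perp,v^\perp)=\int_{\partial\Sigma}\Big(\tfrac12\partial_\eta|v^\perp|^2-|v^\perp|^2\Big)\,ds_g .
$$

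\emph{Step 2 (boundary data).} Put $\phi:=\langle u,v\rangle$; then $\phi$ is harmonic on $\Sigma$, its intrinsic gradient equals $v^\top$, and $|v^\perp|^2=|v|^2-|\nabla\phi|^2$ (below, gradient, Hessian and Laplacian are taken on $(\Sigma,g)$). The free boundary condition $\eta=u$ on $\partial\Sigma$ forces $\partial_\eta\phi=\langle\eta,v\rangle=\phi$ there, and differentiating the identities $\langle u,\tau\rangle\equiv0$ and $\eta=u$ along the boundary curve (unit tangent $\tau$, arclength $s_g$, $'=d/ds_g$) gives $\nabla^\top_\tau\tau=-\eta$ and $\nabla^\top_\tau\eta=\tau$ on $\partial\Sigma$. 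Combining these with $\operatorname{Hess}\phi(\eta,\eta)=-\operatorname{Hess}\phi(\tau,\tau)$ (from $\Delta\phi=0$) one obtains, on $\partial\Sigma$,
$$
\operatorname{Hess}\phi(\eta,\eta)=-(\phi''+\phi),\qquad\operatorname{Hess}\phi(\eta,\tau)=0,
$$
hence $\tfrac12\partial_\eta|v^\perp|^2=-\operatorname{Hess}\phi(\eta,\nabla\phi)=\phi\phi''+\phi^2$, while $|v^\perp|^2=|v|^2-\phi^2-(\phi')^2$ on $\partial\Sigma$.

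\emph{Step 3 (assembling).} As $\partial\Sigma$ is closed, $\int_{\partial\Sigma}\big(\phi\phi''+(\phi')^2\big)\,ds_g=0$, whence $S(v^\perp,v^\perp)=\int_{\partial\Sigma}\big(2\phi^2-|v|^2\big)\,ds_g$. I now move both terms onto $\Sigma$: harmonicity of $\phi$ with $\partial_\eta\phi=\phi$ gives $\int_{\partial\Sigma}\phi^2=\int_{\partial\Sigma}\phi\,\partial_\eta\phi=\int_\Sigma|\nabla\phi|^2=\int_\Sigma|v^\top|^2$, while the standard identity $\Length(\partial\Sigma)=2\Area(\Sigma)$ — a consequence of $\Delta|u|^2=4$ on $\Sigma$ and $\partial_\eta|u|^2=2$ on $\partial\Sigma$ — gives $\int_{\partial\Sigma}|v|^2=2\int_\Sigma|v|^2$. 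Therefore
$$
S(v^\perp,v^\perp)=2\int_\Sigma|v^\top|^2-2\int_\Sigma|v|^2=-2\int_\Sigma\big(|v|^2-|v^\top|^2\big)\,dv_g=-2\int_\Sigma|v^\perp|^2\,dv_g .
$$

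\emph{Step 4 (bilinear form, index bound, and the main obstacle).} Since the identity holds for every $v\in\mathbb R^n$, polarization yields $S(v_1^\perp,v_2^\perp)=-2\int_\Sigma\langle v_1^\perp,v_2^\perp\rangle\,dv_g$; taking $v_1,\dots,v_n$ to be an orthonormal basis of $\mathbb R^n$ that diagonalizes the symmetric form $(v,w)\mapsto\int_\Sigma\langle v^\perp,w^\perp\rangle\,dv_g$ (equivalently, the eigenvectors of $\int_{\partial\Sigma}u\otimes u$) makes all off-diagonal entries $S(v_i^\perp,v_j^\perp)$, $i\ne j$, vanish — this is the ``moreover''. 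If $\Sigma$ is not a plane disk, then $v^\perp\not\equiv0$ for every $v\ne0$: otherwise $v$ would restrict to a nonzero parallel tangent field on $\Sigma$ (because $\nabla_Xv=0$), forcing $\Sigma$ to be flat, hence totally geodesic (minimality plus the Gauss equation), hence a piece of an affine plane — a contradiction. Consequently each diagonal entry $S(v_i^\perp,v_i^\perp)=-2\int_\Sigma|v_i^\perp|^2$ is negative, the fields $v_1^\perp,\dots,v_n^\perp$ are linearly independent, and $S$ is negative definite on their span, so $\Ind(\Sigma)\ge n$. The one genuinely technical ingredient is Step 2 — the boundary value $\tfrac12\partial_\eta|v^\perp|^2=\phi\phi''+\phi^2$ on $\partial\Sigma$ — which relies on the two consequences $\partial_\eta\phi=\phi$ and $\nabla^\top_\tau\tau=-\eta$ of the free boundary condition (geometrically: $\partial\Sigma$ sits in $\partial\mathbb B^n$ like the boundary circle of a spherical cap); everything else is two integrations by parts and the length--area identity.
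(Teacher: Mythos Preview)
The paper does not prove this theorem; it is quoted in the preliminaries section as a result of Fraser--Schoen, without argument. Your proof is correct and self-contained: Step~2's boundary identity $\tfrac12\partial_\eta|v^\perp|^2=\phi\phi''+\phi^2$ follows cleanly from $\partial_\eta\phi=\phi$ and $\nabla^\top_\tau\tau=-\eta$, and Steps~1 and~3 are the two integrations by parts plus the length--area identity, as you say.

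One remark on Step~4. Polarization gives $S(v_1^\perp,v_2^\perp)=-2\int_\Sigma\langle v_1^\perp,v_2^\perp\rangle\,dv_g$, which for $v_1\cdot v_2=0$ equals $2\int_\Sigma\langle v_1^\top,v_2^\top\rangle\,dv_g=2\int_{\partial\Sigma}\langle u,v_1\rangle\langle u,v_2\rangle\,ds_g$; this is \emph{not} zero for an arbitrary orthogonal pair (already for the critical catenoid, a basis mixing the axial and a radial direction gives a nonzero off-diagonal entry, since $\int_{\partial\Sigma}u_1^2\neq\int_{\partial\Sigma}u_3^2$). So the ``moreover'' clause, read literally, is too strong. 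What you prove --- that there \emph{exists} an orthonormal basis, namely one diagonalising $\int_{\partial\Sigma}u\otimes u$, for which all $S(v_i^\perp,v_j^\perp)$ with $i\ne j$ vanish --- is the correct statement and is exactly what is needed for $\Ind(\Sigma)\geqslant n$. The paper's own later use of this orthogonality (in the proof of Theorem~\ref{main_intro}) is for the \emph{standard} basis on rotationally symmetric surfaces, where the off-diagonal boundary integrals vanish by symmetry anyway, so no harm is done there.
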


The following proposition is commonly known.

\begin{proposition}\label{vperp}
The normal field $v^\perp$ on $\Sigma$ is a Jacobi field, i.e. it satisfies the equation $L(v^\perp)=0$, where $L$ is the Jacobi operator.
\end{proposition}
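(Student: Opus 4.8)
The plan is to prove $L(v^\perp)=\Delta^\perp v^\perp+\mathcal B(v^\perp)=0$ by a direct computation that uses only that $v$ is a \emph{constant}, hence parallel, vector field on $\mathbb R^n$, together with the Gauss--Weingarten and Codazzi equations and the minimality of $\Sigma$. (A conceptually shorter route also works: $u_s:=u+sv$ is a one-parameter family of minimal immersions of $\Sigma$ into $\mathbb R^n$, since translations preserve minimality, with variation field $v=v^\top+v^\perp$; differentiating the identity that the mean curvature vector of $u_s$ vanishes, at $s=0$, and invoking the standard formula for the linearization of the mean-curvature operator along a minimal immersion, yields $L(v^\perp)=0$ directly. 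I would nevertheless carry out the computation to keep things self-contained and to fix sign conventions.)

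First I would fix a point $p\in\Sigma$ and a local orthonormal frame $e_1,e_2$ of $T\Sigma$ that is geodesic at $p$. Writing $v=v^\top+v^\perp$ and differentiating, $0=\nabla_{e_i}v$; splitting into tangent and normal parts by means of the Gauss formula $\nabla_{e_i}v^\top=\nabla^\top_{e_i}v^\top+B(e_i,v^\top)$ and the Weingarten formula $\nabla_{e_i}v^\perp=-A_{v^\perp}e_i+\nabla^\perp_{e_i}v^\perp$, where $A_{v^\perp}$ is the shape operator $\langle A_{v^\perp}e_i,e_j\rangle=b_{ij}\cdot v^\perp$, produces the two first-order identities
$$
\nabla^\top_{e_i}v^\top=A_{v^\perp}e_i,\qquad \nabla^\perp_{e_i}v^\perp=-B(e_i,v^\top).
$$

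Next I would substitute the second identity into the definition of $\Delta^\perp$ from Section~\ref{notations}. Evaluating at $p$, the term $\nabla^\perp_{(\nabla_{e_i}e_i)^\top}v^\perp$ drops out, and expanding $\nabla^\perp_{e_i}\bigl(B(e_i,v^\top)\bigr)$ through the normal connection --- all tangential Christoffel-type terms at $p$, including $\nabla^\top_{v^\top}e_i(p)$, vanish because the frame is geodesic at $p$ and $v^\top$ is a fixed linear combination of the $e_j$ --- yields
$$
\Delta^\perp v^\perp=-\sum_i(\nabla^\perp_{e_i}B)(e_i,v^\top)-\sum_i B(e_i,A_{v^\perp}e_i).
$$
Using the Codazzi equation for submanifolds of flat $\mathbb R^n$ and the symmetry of $B$, the first sum equals $\sum_i(\nabla^\perp_{v^\top}B)(e_i,e_i)=\nabla^\perp_{v^\top}\bigl(\sum_i B(e_i,e_i)\bigr)$, which vanishes since $\Sigma$ is minimal. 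For the second sum, $A_{v^\perp}e_i=\sum_j(b_{ij}\cdot v^\perp)e_j$, so it equals $\sum_{i,j}(b_{ij}\cdot v^\perp)b_{ij}=\mathcal B(v^\perp)$. Hence $\Delta^\perp v^\perp=-\mathcal B(v^\perp)$, i.e.\ $L(v^\perp)=0$.

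The computation is entirely routine; the only thing that needs care --- and it is the closest thing to an obstacle --- is sign and index bookkeeping, namely keeping the conventions of Section~\ref{notations} straight so that $\mathcal B$ emerges with a plus sign and checking that every connection term genuinely vanishes at the chosen point.
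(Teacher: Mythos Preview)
The paper does not actually supply a proof of this proposition; it simply records it as ``commonly known'' and moves on. Your direct computation is correct and is the standard one: decompose the parallel field $v$ into tangential and normal parts, use Gauss--Weingarten to obtain $\nabla^\perp_{e_i}v^\perp=-B(e_i,v^\top)$ and $\nabla^\top_{e_i}v^\top=A_{v^\perp}e_i$, then feed these into $\Delta^\perp$ and invoke Codazzi plus minimality. The only cosmetic point is the phrase ``$v^\top$ is a fixed linear combination of the $e_j$'': the coefficients $\langle v,e_j\rangle$ vary over $\Sigma$, but what you actually use (and what is true) is that $\nabla^\top_{v^\top}e_i$ at $p$ depends only on the \emph{value} $v^\top(p)$, hence is a linear combination of the $\nabla^\top_{e_j}e_i(p)$, all of which vanish for a geodesic frame. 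The alternative argument you sketch --- differentiating the minimal-surface equation along the translation $u_s=u+sv$ --- is equally valid and is in fact the conceptual reason the result is regarded as folklore.
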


As we mention in the Introduction there exists an explicit lower bound on the index of a free boundary minimal hypersurface.

\begin{theorem}[Devyver~\cite{devyver2019index}]\label{devyver}
Let $\Sigma$ be a non-flat free boundary minimal hypersurface in $\mathbb B^n$. Then $\Ind(\Sigma)\geqslant n+1$.
\end{theorem}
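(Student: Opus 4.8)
The plan is to produce an $(n+1)$-dimensional space of normal variations on which the second variation is negative definite, using the components of the immersion $u$ itself rather than their normal projections (the latter is the Fraser--Schoen construction of Theorem~\ref{FS}, and yields only $n$).

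Assume $\Sigma$ is connected and two-sided, with a global unit normal $\nu$, and write $A$ for the second fundamental form. Because $\partial\mathbb B^n=\mathbb S^{n-1}$ is umbilic, one has $L(f\nu)=(\Delta_\Sigma f+|A|^2f)\nu$, and an integration by parts in the definition of $S$ reduces the second variation on fields $f\nu$ to the scalar form
$$
Q(f):=S(f\nu,f\nu)=\int_\Sigma\bigl(|\nabla f|^2-|A|^2f^2\bigr)\,dv_g-\int_{\partial\Sigma}f^2\,ds_g ,\qquad f\in C^\infty(\Sigma),
$$
so $\Ind(\Sigma)$ is the Morse index of $Q$. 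Let $u_1,\dots,u_n$ be the components of $u$ and put $V:=\myspan\{1,u_1,\dots,u_n\}\subset C^\infty(\Sigma)$. Since a hypersurface whose image lies in an affine hyperplane is flat, non-flatness of $\Sigma$ forces $1,u_1,\dots,u_n$ to be linearly independent, so $\dim V=n+1$; this is the only test space I will need.

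I would then compute $Q$ on $V$ directly. Minimality gives $\Delta_\Sigma u_i=0$, so every $f\in V$ is harmonic on $\Sigma$; the free boundary condition forces $\partial_\eta u_i=u_i$ along $\partial\Sigma$ (the outward conormal $\eta$ is the position vector of $\mathbb R^n$); and integrating $\Delta_\Sigma u_i=0$ together with this boundary identity gives $\int_{\partial\Sigma}u_i\,ds_g=0$. For $f=a_0+\sum_{i=1}^n a_iu_i\in V$, harmonicity yields $\int_\Sigma|\nabla f|^2\,dv_g=\int_{\partial\Sigma}f\,\partial_\eta f\,ds_g$; on $\partial\Sigma$ one has $\partial_\eta f=\sum_i a_iu_i=f-a_0$ and $\int_{\partial\Sigma}f\,ds_g=a_0\Length(\partial\Sigma)$, hence $\int_\Sigma|\nabla f|^2\,dv_g=\int_{\partial\Sigma}f^2\,ds_g-a_0^2\Length(\partial\Sigma)$. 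Substituting into the displayed formula for $Q$,
$$
Q(f)=-a_0^2\,\Length(\partial\Sigma)-\int_\Sigma|A|^2f^2\,dv_g\;\le\;0 .
$$
If $Q(f)=0$ then $a_0=0$ and $|A|^2f^2\equiv0$; as $\Sigma$ is non-flat, $\{|A|\neq0\}$ is a non-empty open set, so the harmonic function $f$ vanishes on an open subset of the connected surface $\Sigma$ and therefore $f\equiv0$, whence all $a_i=0$. Thus $Q$ is negative definite on the $(n+1)$-dimensional space $V$, and $\Ind(\Sigma)\ge n+1$.

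The whole argument is elementary; the points that need care are the reduction of $S$ to the scalar form $Q$, where the umbilicity of $\partial\mathbb B^n$ is precisely what produces the boundary term $-\int_{\partial\Sigma}f^2\,ds_g$, and the equality case, which uses unique continuation for harmonic functions and invokes non-flatness twice — to ensure both that $\{|A|\neq0\}\neq\varnothing$ and that the image of $u$ is not contained in a hyperplane. One may also note that the statement follows a posteriori from the later Theorem~\ref{+n}, since $\Ind_S(\Sigma)\ge1$ always.
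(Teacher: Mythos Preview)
Your argument is correct. The paper does not give its own proof of Theorem~\ref{devyver}; it is quoted from Devyver as a preliminary fact. However, your proof is exactly the specialization $k=1$, $\varphi_1=\mathrm{const}$, $\sigma_1=0$ of the paper's proof of Theorem~\ref{+n} in Section~\ref{ES}: there the test space is $\myspan\{\varphi_1,\dots,\varphi_k,u_1,\dots,u_n\}$ with $\varphi_i$ Steklov eigenfunctions of eigenvalue $<1$, and the computation of the index form on this space is identical to yours. You even note this connection in your last sentence. If anything, your treatment of the equality case (unique continuation for the harmonic function $f$ on $\{|A|\neq0\}$, plus the observation that linear dependence of $1,u_1,\dots,u_n$ forces the image into an affine hyperplane) is more explicit than the paper's one-line ``$S(\psi,\psi)<0$ as soon as $\Sigma$ is not flat.'' The only tacit hypothesis you add is two-sidedness, which the paper also assumes implicitly when it writes the index form on scalar functions in~\eqref{form}.
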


\section{Technical results}\label{tech}

We will use the approach described in~\cite{kusner2018index} (see also~\cite[Section 6]{karpukhin2021stability} for the non-orientable case). 

\medskip

Choose isothermal local coordinates $(x,y)$ on $\Sigma$. Then the metric $g$ on $\Sigma$ takes the form $g=e^{2\omega}(dx^2+dy^2)$, where $\omega \in C^\infty(\Sigma)$. Further, introduce the local complex coordinate $z=x+iy$. Then $g=e^{2\omega}|dz|^2$. Let $E$ be either the normal or the tangent bundle. For any local sections $X, Y$ of $E \otimes_{\mathbb R}\mathbb C$ we also use the Hermitian scalar product $X\cdot \bar Y$, where $\bar Y$ is conjugate to $Y$. Particularly, $|X|^2=X\cdot \bar X$. In the coordinates $(x,y)$ and $z$ the immersion $u\colon \Sigma \to \mathbb B^n$ is conformal and harmonic. Hence the following claim is obvious

\medskip

{\bf Claim 1.} One has

\begin{itemize}
\item $|u_x|^2=u_x\cdot u_x=u_y\cdot u_y=|u_y|^2=e^{2\omega}$ and $|u_z|^2=|u_{\bar z}|^2=u_z\cdot u_{\bar z}=\frac{1}{2}e^{2\omega}$;
\item $u_z\cdot u_z=u_{\bar z}\cdot u_{\bar z}=0$ and $u_{z\bar z}=0$.
\end{itemize}

\medskip

Using the notation in~\cite{kusner2018index} we set $u_{zz}^\perp=\Omega$. Note that $\Omega$ is a local section of $N\Sigma\otimes_{\mathbb R}\mathbb C$. We also use the notation $\nabla^\perp_z:=\nabla^\perp_{\partial/\partial z}$ and $\nabla^\perp_{\bar z}:=\nabla^\perp_{\partial/\partial \bar z}$ and the similar notation for $\nabla^\top_z$ and $\nabla^\top_{\bar z}$.

\medskip

{\bf Claim 2.} One has
$$
\begin{cases}
u_{zz}=2\omega_zu_z+\Omega,\\
X_z=\nabla^\perp_zX-2e^{-2\omega}(X\cdot\Omega)u_{\bar z},\\ 
X_{\bar z}=\nabla^\perp_{\bar z}X-2e^{-2\omega}(X\cdot \bar\Omega)u_z, 
\end{cases}
$$
for any local section $X$ of $N\Sigma\otimes_{\mathbb R}\mathbb C$.

\medskip

\begin{proof}
By Claim 1 $u_z$ and $u_{\bar z}$ are perpendicular with respect to the Hermitian scalar product. Then the projection formula implies
$$
u_{zz}=\frac{u_{zz}\cdot u_{\bar z}}{|u_z|^2}u_z+\frac{u_{zz}\cdot u_z}{|u_{\bar z}|^2}u_{\bar z}+\Omega.
$$ 
By Claim 1
\begin{gather*}
u_z\cdot u_z=0,\\
u_z\cdot u_{\bar z}=\frac{1}{2}e^{2\omega},
\end{gather*}
which implies
\begin{gather*}
u_{zz}\cdot u_z=0,\\
u_{zz}\cdot u_{\bar z}=\omega_ze^{2\omega}
\end{gather*}
Substituting it in the projection formula and using Claim 1 once again we get the first identity.

Similarly, to get the second identity we use the projection formula
$$
X_z=(X_z)^\perp+\frac{X_z\cdot u_{\bar z}}{|u_z|^2}u_z+\frac{X_z\cdot u_z}{|u_{\bar z}|^2}u_{\bar z}.
$$
Note that by definition $(X_z)^\perp=\nabla_z^\perp X$ and
$$
X\cdot u_z=0=X\cdot u_{\bar z},
$$
for any local section $X$ of $N\Sigma\otimes_{\mathbb R}\mathbb C$, whence
\begin{gather*}
X_z\cdot u_z=-X\cdot u_{zz}=-X\cdot (u_{zz})^\perp ,\\
X_z\cdot u_{\bar z}=-X\cdot u_{z\bar z}=0
\end{gather*}
by Claim 1. Using the formula for $u_{zz}$ and Claim 1 once again completes the proof of the second identity. The proof of the third identity is absolutely similar.
\end{proof}

{\bf Claim 3.} The following identities hold
$$
\begin{cases}
\nabla^\perp_{\bar z}\Omega=0,\\
\nabla^\perp_{\bar z}\nabla^\perp_z X-\nabla^\perp_z\nabla^\perp_{\bar z}X=2e^{-2\omega}\left((X\cdot\Omega)\bar\Omega-(X\cdot\bar\Omega)\Omega\right), 
\end{cases}
$$
for any local section $X$ of $N\Sigma\otimes_{\mathbb R}\mathbb C$.

\begin{proof}
By definition $\nabla^\perp_{\bar z}\Omega=(\Omega_{\bar z})^\perp=\left((u_{zz})^\perp_{\bar z}\right)^\perp$. The projection formula yields
$$
(u_{zz})^\perp=u_{zz}-\frac{u_{zz}\cdot u_{\bar z}}{|u_z|^2}u_z-\frac{u_{zz}\cdot u_z}{|u_{\bar z}|^2}u_{\bar z}=u_{zz}-\frac{u_{zz}\cdot u_{\bar z}}{|u_z|^2}u_z,
$$
since $u_{zz}\cdot u_z=0$ as we have seen in the proof of Claim 2. Differentiaiting implies:
$$
(u_{zz})^\perp_{\bar z}=u_{zz\bar z}-\left(\frac{u_{zz}\cdot u_{\bar z}}{|u_z|^2}\right)_{\bar z}u_z-\frac{u_{zz}\cdot u_{\bar z}}{|u_z|^2}u_{z\bar z}=-\left(\frac{u_{zz}\cdot u_{\bar z}}{|u_z|^2}\right)_{\bar z}u_z,
$$ 
since $u_{z\bar z}=0$ by Claim 1. Hence, $\left((u_{zz})^\perp_{\bar z}\right)^\perp=0$.

Let's prove the second identity. For any local section $X$ of $N\Sigma\otimes_{\mathbb R}\mathbb C$ by Claim 2 one has
\begin{gather*}
X_{\bar zz}=\nabla^\perp_{\bar z} X_z-2e^{-2\omega}(X_z\cdot \bar\Omega)u_z=\\=\nabla^\perp_{\bar z}\nabla^\perp_zX-\left(2e^{2\omega}(X\cdot \Omega)\right)_{\bar z}u_{\bar z}-2e^{-2\omega}(X\cdot \Omega)(u_{\bar z\bar z})^\perp-2e^{-2\omega}(X_z\cdot \bar\Omega)u_z
\end{gather*}
and
\begin{gather*}
X_{z\bar z}=\nabla^\perp_z X_{\bar z}-2e^{-2\omega}(X_{\bar z}\cdot \Omega)u_{\bar z}=\\=\nabla^\perp_z\nabla^\perp_{\bar z}X-\left(2e^{2\omega}(X\cdot \bar\Omega)\right)_zu_z-2e^{-2\omega}(X\cdot \bar\Omega)(u_{zz})^\perp-2e^{-2\omega}(X_{\bar z}\cdot \Omega)u_{\bar z}.
\end{gather*}
Then the second identity in the claim follows from the fact that
$$
(X_{\bar zz})^\perp=(X_{z\bar z})^\perp.
$$
\end{proof}

{\bf Claim 4.} The Laplacian on the normal bundle takes the form
$$
\Delta^\perp X=2e^{-2\omega}\left(\nabla^\perp_{\bar z}\nabla^\perp_z X+\nabla^\perp_z\nabla^\perp_{\bar z}X\right), 
$$
for any local section $X$ of $N\Sigma\otimes_{\mathbb R}\mathbb C$.

\begin{proof}
This formula immediately follows from the formula for the Laplacian on the normal bundle in Section~\ref{notations}.
\end{proof}

Since $\Omega$ is a local section of $N\Sigma\otimes_{\mathbb R}\mathbb C$ then we introduce the local sections $\Omega_1,\Omega_2$ of $N\Sigma$ such that $\Omega=\Omega_1+i\Omega_2$. We now specialize to the case of the annulus. Let $\Sigma=[-T,T]\times \mathbb S^1$ be a free boundary minimal annulus for some $T>0$ and $z=t+i\theta, (t,\theta)\in [-T,T]\times \mathbb S^1$ be the complex coordinate. Note that this coordinate is global on $\Sigma$. Also suppose that $\Sigma$ is $\mathbb S^1$-symmetric. Hence $|u_t|=|u_\theta|=e^\omega=const$ along $\partial\Sigma$. It follows from~\cite[Theorem 1.3]{fraser2021existence} that $\Sigma$ is either a Fraser-Sargent surface or a critical $q-$catenoid.

\medskip

{\bf Claim 5.} One has
\begin{itemize}
\item $\Omega_1=\frac{1}{2}e^{2\omega}b_{11}$ and $\Omega_2=-\frac{1}{2}e^{2\omega}b_{12}$.
\item $\Omega_2=0$ along the boundary.
\end{itemize}

\begin{remark}
The first item in Claim 5 holds for any minimal surface.
\end{remark}

\begin{proof}
By definition one has $B(\partial/\partial x_i,\partial/\partial x_j)=\nabla^\perp_{\partial/\partial x_i}\frac{\partial u}{\partial x_j}$, where $x_1=t$ and $x_2=\theta$. A straightforward computation shows
\begin{gather*}
\Omega=(u_{zz})^\perp=\nabla_z^\perp u_z=\frac{1}{4}\left(B(\partial/\partial t,\partial/\partial t)-B(\partial/\partial \theta,\partial/\partial \theta)-2iB(\partial/\partial t,\partial/\partial \theta)\right)=\\=\frac{1}{4}e^{2\omega}\left(b_{11}-b_{22}-2ib_{12}\right).
\end{gather*}
By the minimality of $\Sigma$ one has $b_{11}+b_{22}=0$, which implies $\Omega=\frac{1}{2}e^{2\omega}\left(b_{11}-ib_{12}\right)$. The first item is proved.

Let us prove that $b_{12}=0$ along the boundary. Let $p\in\partial \Sigma$. Then $e^{-\omega}\frac{\partial u}{\partial t}=\eta$ is the outward unit normal and $e^{-\omega}\frac{\partial u}{\partial \theta}=\tau$ is a unit tangent to $\partial \Sigma$. One has
$$
b_{12}(p)=\nabla^\perp_\tau\eta=\tau^\perp=0,
$$
since $\eta$ is the position vector along the boundary. Since the point $p$ was chosen arbitrarily, we get that $b_{12}=0$ along the boundary.
\end{proof}

As we have already noticed the coordinate $z$ is global on $\Sigma$. Therefore, the field $\partial/\partial z$ is globally defined on $\Sigma$ as well as the fields $u_{zz}, u_z$ and the function $\omega_z$. Hence by Claim 2 so is the normal vector field $\Omega$. We then introduce \textit{the quartic Hopf differential} $\mathcal H=(\Omega\cdot\Omega)dz^4$. In the proof of the following proposition we show that $\mathcal H$ is a holomorphic quartic differential which is real on the boundary of $\Sigma$.  

\begin{proposition}\label{dot}
The function $\Omega\cdot\Omega$ is a real constant.
\end{proposition}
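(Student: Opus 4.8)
The plan is to verify the two properties of the quartic Hopf differential $\mathcal H=(\Omega\cdot\Omega)\,dz^4$ announced just above---holomorphicity and reality along $\partial\Sigma$---and then to apply elementary complex analysis on the annulus. Since the coordinate $z=t+i\theta$ is global on $\Sigma$, the quantity $f:=\Omega\cdot\Omega$ is a well-defined smooth $\mathbb C$-valued function on $\Sigma$, and the assertion amounts to showing that $f$ is a real constant.

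First I would establish holomorphicity, i.e.\ $\partial_{\bar z}f=0$. Because $X\cdot Y$ is the $\mathbb C$-bilinear symmetric extension of the Euclidean dot product, the product rule gives $\partial_{\bar z}f=2\,\Omega_{\bar z}\cdot\Omega$. Feeding $X=\Omega$ into the third identity of Claim 2 and using the first identity of Claim 3 (which annihilates $\nabla^\perp_{\bar z}\Omega$), one gets $\Omega_{\bar z}=-2e^{-2\omega}(\Omega\cdot\bar\Omega)u_z$, a multiple of the tangential field $u_z$. Since $X\cdot u_z=0$ for every section $X$ of $N\Sigma\otimes_{\mathbb R}\mathbb C$, it follows that $\Omega_{\bar z}\cdot\Omega=0$; hence $\partial_{\bar z}f=0$ and $\mathcal H$ is a holomorphic quartic differential.

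Next I would check that $\mathcal H$ is real on the boundary. Writing $\Omega=\Omega_1+i\Omega_2$ as before, one has $f=|\Omega_1|^2-|\Omega_2|^2+2i\,\Omega_1\cdot\Omega_2$, and the second item of Claim 5 ($\Omega_2=0$ on $\partial\Sigma$) reduces this to $f=|\Omega_1|^2\geqslant 0$ along $\partial\Sigma$. Since the boundary circles are $\{t=\pm T\}$, along which $z$ restricts to $\pm T+i\theta$ so that $dz^4=(i\,d\theta)^4=d\theta^4$ is real, this is precisely the statement that $\mathcal H$ is real on $\partial\Sigma$.

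Finally I would conclude as follows: $\im f$ is harmonic in the interior of the compact annulus $\Sigma$, extends smoothly to $\partial\Sigma$, and vanishes there, so by the maximum principle $\im f\equiv 0$; a real-valued holomorphic function on the connected surface $\Sigma$ is constant, whence $\Omega\cdot\Omega$ is a real constant. I do not anticipate a genuine difficulty: the first two steps are a direct bookkeeping exercise with Claims 1--5, and the only point meriting care is the last one, where compactness of $\Sigma$ and smoothness of $f$ up to $\partial\Sigma$ are needed to invoke the maximum principle. (Alternatively, one may expand $f$ as a Laurent series in $e^{z}$ and read off directly from the two reality conditions that only the constant, necessarily real, term survives.)
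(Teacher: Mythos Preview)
Your proposal is correct and follows essentially the same approach as the paper: establish holomorphicity of $\Omega\cdot\Omega$ via $\nabla^\perp_{\bar z}\Omega=0$ (Claim~3), use $\Omega_2=0$ on $\partial\Sigma$ (Claim~5) to see that $\Omega\cdot\Omega$ is real there, and conclude that a holomorphic function real on the boundary of the annulus is a real constant. The only cosmetic difference is that the paper reaches $\Omega_{\bar z}\cdot\Omega=0$ by noting $(\Omega_{\bar z})^\perp=\nabla^\perp_{\bar z}\Omega=0$ directly, whereas you pass through the explicit formula $\Omega_{\bar z}=-2e^{-2\omega}(\Omega\cdot\bar\Omega)u_z$ from Claim~2; both are the same observation.
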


\begin{proof}
Essentially, the proof is given in~\cite[Section 1.5.2]{fraser2020geometric}. For the sake of completeness we give it here.    

First, we prove that $(\Omega\cdot\Omega)_{\bar z}=0$. Indeed,
$$
(\Omega\cdot\Omega)_{\bar z}=2\Omega_{\bar z}\cdot\Omega=2(\Omega_{\bar z})^\perp\cdot\Omega=2\nabla^\perp_{\bar z}\Omega \cdot \Omega=0
$$
by Claim 3. Hence, $\Omega\cdot\Omega$ is holomorphic and $\mathcal H$ is a holomorphic quartic differential.

Further, consider the field $\partial/\partial \theta$. One has $dz(\partial/\partial \theta)=(dt+id\theta)(\partial/\partial \theta)=i$ whence 
$$
\mathcal H(\partial/\partial \theta)=\Omega\cdot\Omega =\frac{1}{4}e^{4\omega}|b_{11}|^2
$$
 on $\partial\Sigma$ since $b_{12}=0$ by Claim 5. Hence, the function $\mathcal H(\partial/\partial \theta)$ is holomorphic on $\Sigma$ and real on $\partial \Sigma$. Thus, $\mathcal H(\partial/\partial \theta)=\Omega\cdot\Omega$ is a real constant. 
\end{proof}

{\bf Claim 6.}  One has
\begin{itemize}
\item $\Omega_1\cdot \Omega_2=0$ and $\Omega\cdot\Omega=|\Omega_1|^2-|\Omega_2|^2$;
\item $\Delta^\perp\Omega=4e^{-4\omega}\left((\Omega\cdot\Omega)\bar\Omega-(\Omega\cdot\bar \Omega)\Omega\right)$;
\item $\Delta^\perp\Omega_1=-8e^{-4\omega}|\Omega_2|^2\Omega_1$ and $\Delta^\perp\Omega_2=-8e^{-4\omega}|\Omega_1|^2\Omega_2$;
\item $\mathcal B(\Omega_j)=8e^{-4\omega}|\Omega_j|^2\Omega_j, j=1,2$.
\end{itemize}

\begin{proof}
Let's prove the first item. One has
$$
\Omega\cdot\Omega=|\Omega_1|^2-|\Omega_2|^2-2i\Omega_1\cdot\Omega_2.
$$
Since by Proposition~\ref{dot} $\Omega\cdot\Omega$ is real we get that $\Omega_1\cdot\Omega_2=0$ and $\Omega\cdot\Omega=|\Omega_1|^2-|\Omega_2|^2$ by comparing the real and imaginary parts.

In order to get the second item we apply the formula for the Laplacian in Claim 4 and then we use Claim 3.

The third item follows from the formula
\begin{gather*}
\Delta^\perp \Omega=\Delta^\perp\Omega_1+i\Delta^\perp\Omega_2=\\4e^{-4\omega}\left(((\Omega_1+i\Omega_2)\cdot(\Omega_1+i\Omega_2))(\Omega_1-i\Omega_2)-((\Omega_1+i\Omega_2)\cdot (\Omega_1-i\Omega_2))(\Omega_1+i\Omega_2))\right)
\end{gather*}
by comparing the real and imaginary parts.    

Finally, in order to prove the last item we use the explicit formula for the Simons operator in Section~\ref{notations}.
\end{proof}

\section{An auxiliary theorem}\label{proof} 

Our aim in this section is to prove the following theorem 

\begin{theorem}\label{main_intro}
Let $\Sigma$ be a Fraser-Sargent annulus in $\mathbb B^4$ or a critical $q-$catenoid in $\mathbb B^3$.  Let $v_1,\ldots,v_n$ be the standard basis of $\mathbb R^n$, where $n=4$ or $3$ respectively. Then the second variation of the volume functional $S$ is negative definite on $span\{\Omega_1,v_1^\perp,\ldots,v_n^\perp\}$ and the fields $\Omega_1,v_1^\perp,\ldots,v_n^\perp$ are linearly independent.  Particularly,  the index of $\Sigma$ is at least $n+1$. 
\end{theorem}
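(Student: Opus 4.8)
The plan is to show that $S$ is negative definite on the span and then that the fields are linearly independent; the nontriviality is mostly bookkeeping once the analytic estimates are in place. First I would record what is already known about the $v_i^\perp$: by Theorem~\ref{FS}, since $\Sigma$ is not a plane disk, $S(v_i^\perp,v_j^\perp)=-2\delta_{ij}\int_\Sigma |v_i^\perp|^2\,dv_g$, so $S$ restricted to $\myspan\{v_1^\perp,\dots,v_n^\perp\}$ is already negative definite (note $\sum_i |v_i^\perp|^2 = 2$ pointwise, so none of these fields vanishes identically). The task therefore reduces to (i) computing $S(\Omega_1,\Omega_1)$ and showing it is negative, and (ii) controlling the cross terms $S(\Omega_1, v_i^\perp)$.

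For (i), I would use the definition $S(\Omega_1,\Omega_1) = -\int_\Sigma \langle L(\Omega_1),\Omega_1\rangle\,dv_g + \int_{\partial\Sigma}(\langle \Omega_1,\nabla^\perp_\eta \Omega_1\rangle - |\Omega_1|^2)\,ds_g$ together with $L(\Omega_1) = \Delta^\perp\Omega_1 + \mathcal B(\Omega_1)$. By Claim 6, $\Delta^\perp\Omega_1 = -8e^{-4\omega}|\Omega_2|^2\Omega_1$ and $\mathcal B(\Omega_1) = 8e^{-4\omega}|\Omega_1|^2\Omega_1$, so $L(\Omega_1) = 8e^{-4\omega}(|\Omega_1|^2 - |\Omega_2|^2)\Omega_1 = 8e^{-4\omega}(\Omega\cdot\Omega)\Omega_1$, and by Proposition~\ref{dot} the scalar $\Omega\cdot\Omega$ is a real constant $c$. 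Thus $\langle L(\Omega_1),\Omega_1\rangle = 8c\,e^{-4\omega}|\Omega_1|^2$. For the boundary term I would use Claim 5: along $\partial\Sigma$ one has $\Omega_2 = -\tfrac12 e^{2\omega}b_{12} = 0$, so $\Omega_1 = \Omega$ there and $c = |\Omega_1|^2$ on the boundary, i.e. $c>0$ (one must check $\Omega_1$ does not vanish identically, which follows since $\Sigma$ is not flat). It remains to evaluate $\int_{\partial\Sigma}\langle\Omega_1,\nabla^\perp_\eta\Omega_1\rangle\,ds_g$; here I expect the $\mathbb S^1$-symmetry and the explicit parametrizations of the Fraser–Sargent annuli and the $q$-catenoid to give $\nabla^\perp_\eta\Omega_1 = \lambda\,\Omega_1$ on $\partial\Sigma$ for an explicit constant $\lambda$, reducing everything to an ODE computation analogous to the catenoid case in~\cite{devyver2019index, tran2016index}. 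Assembling, $S(\Omega_1,\Omega_1) = -8c\int_\Sigma e^{-4\omega}|\Omega_1|^2\,dv_g + (\lambda - 1)\int_{\partial\Sigma}|\Omega_1|^2\,ds_g$, and the claim is that this is strictly negative; the sign of the boundary contribution is the delicate point and is where I expect to lean on the explicit formulas for $T_{k,l}$ (resp. $t_{1,0}$) and the defining transcendental equations.

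For (ii), the cross terms: since $L(v_i^\perp)=0$ by Proposition~\ref{vperp}, one has $S(\Omega_1, v_i^\perp) = -\int_\Sigma \langle L(\Omega_1), v_i^\perp\rangle\,dv_g + \int_{\partial\Sigma}(\langle \Omega_1, \nabla^\perp_\eta v_i^\perp\rangle - \langle\Omega_1,v_i^\perp\rangle)\,ds_g$ — or, symmetrizing, $-\int_\Sigma \langle\Omega_1, L(v_i^\perp)\rangle + \text{boundary} = \text{boundary terms only}$. Using $L(\Omega_1) = 8c\,e^{-4\omega}\Omega_1$ and the known behavior of $v_i^\perp$ on $\partial\Sigma$ (where $v_i^\perp$ and $\nabla^\perp_\eta v_i^\perp$ are governed by the position vector / Steklov condition), I would compute each $S(\Omega_1,v_i^\perp)$ explicitly. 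The cleanest outcome would be that symmetry forces $S(\Omega_1, v_i^\perp) = 0$ for all $i$ — plausible because $\Omega_1$ transforms under the $\mathbb S^1$-action (and under the involution, in the $q$-catenoid case) according to a nontrivial character that is orthogonal to the characters carried by the $v_i$ — in which case negative definiteness of $S$ on the full span is immediate from (i) and Theorem~\ref{FS}. If the cross terms do not all vanish, one instead completes the square, using that $S(v_i^\perp,v_i^\perp)$ is a definite negative quantity to absorb them; this is where the proof could get technical.

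Finally, linear independence: a relation $\alpha\Omega_1 + \sum_i \beta_i v_i^\perp = 0$ restricted to $\partial\Sigma$ becomes, using $\sum_i \beta_i v_i^\perp = (\sum_i\beta_i v_i)^\perp = w^\perp$ with $w = \sum\beta_i v_i$, an identity $\alpha\Omega_1 = -w^\perp$ on $\partial\Sigma$; along $\partial\Sigma$, $w^\perp$ is a component of the position vector while $\Omega_1$ is (up to constants) $e^{2\omega}b_{11}$, and these point in transverse directions in the normal bundle for the explicit surfaces in question, forcing $\alpha = 0$ and then $w^\perp \equiv 0$, which by the non-degeneracy in Theorem~\ref{FS} forces $w=0$, i.e. all $\beta_i = 0$. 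The main obstacle throughout is the sign determination of the boundary integral in step (i) — i.e. extracting enough from the transcendental defining equations of the Fraser–Sargent and $q$-catenoid parameters to pin down $\lambda$ and conclude $S(\Omega_1,\Omega_1)<0$; everything else is either cited (Theorem~\ref{FS}, Proposition~\ref{vperp}, Claims 5–6, Proposition~\ref{dot}) or reduces to a finite explicit computation.
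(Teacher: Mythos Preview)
Your overall architecture matches the paper's: compute $L(\Omega_1)=8e^{-4\omega}(\Omega\cdot\Omega)\,\Omega_1$ via Claim~6 and Proposition~\ref{dot}, show $S(\Omega_1,\Omega_1)<0$, show the cross terms $S(\Omega_1,v_i^\perp)$ vanish, and deduce negative definiteness and linear independence. But you are making two steps much harder than they are, and the second of them is exactly the one you flag as the ``main obstacle.''

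For the boundary term $\langle\Omega_1,\nabla^\perp_\eta\Omega_1\rangle$ in step~(i): there is no transcendental computation and no need to find a constant $\lambda$ with $\nabla^\perp_\eta\Omega_1=\lambda\Omega_1$. Since $\Omega\cdot\Omega$ is constant (Proposition~\ref{dot}), differentiating gives $\nabla^\perp_X\Omega\cdot\Omega=0$ for any $X$; restricting to $\partial\Sigma$ where $\Omega_2=0$ (Claim~5) and taking the real part yields $\nabla^\perp_\eta\Omega_1\cdot\Omega_1=0$ directly. With $c=\Omega\cdot\Omega>0$ (normalized to $1$) and $|\Omega_1|^2=c$ on $\partial\Sigma$, this gives $S(\Omega_1,\Omega_1)=-8c\int_\Sigma e^{-4\omega}|\Omega_1|^2\,dv_g-c\,\Length(\partial\Sigma)<0$ with no further work.

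For the cross terms in step~(ii): the mechanism is not the $\mathbb S^1$-action. After reducing to the boundary via $L(v_i^\perp)=0$, one computes $\nabla^\perp_\eta v_i^\perp=-(v_i\cdot\eta)b_{11}$ and finds $S(\Omega_1,v_i^\perp)$ is a combination of $\int_{\partial\Sigma}u^i\,ds_g$ and $\int_{\partial\Sigma}b_{22}^i\,ds_g$. The first vanishes because $u^i$ is a Steklov eigenfunction with eigenvalue $1$; the second vanishes because $b_{22}$ is the geodesic curvature vector of $\partial\Sigma$ in $\mathbb S^{n-1}$, so its integral over each closed boundary component is $\int_0^S \tfrac{dw}{ds}\,ds=0$. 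Once $S(\Omega_1,v_i^\perp)=0$ for all $i$, linear independence is immediate (a relation $\Omega_1=w^\perp$ would force $0=S(\Omega_1,w^\perp)=S(w^\perp,w^\perp)<0$), so your separate boundary-geometry argument is unnecessary.
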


Before proving Theorem~\ref{main_intro} we provide some computational examples illustrating that the quartic Hopf differentials of Fraser-Sargent surfaces, the critical M\"obius band and critical $q-$catenoids do not vanish.

\begin{example}\label{ExFS}
Let us show that $\mathcal H\neq 0$ for Fraser-Sargent surfaces. Recall that they are given by the following formula:
$$
u(t,\theta)=\frac{1}{r_{k,l}}(k\sinh(lt)\cos(l\theta),k\sinh(lt)\sin(l\theta),l\cosh(kt)\cos(k\theta),l\cosh(kt)\sin(k\theta)),
$$
where $k,l \in \mathbb N$ with $k>l$, $r_{k,l}=\sqrt{k^2\sinh^2(lt_{k,l})+l^2\cosh^2(kt_{k,l})}$ and $t_{k,l}$ is the unique positive solution of $k\tanh(kt)=l\tanh(lt)$.

By Claim 5 $\Omega_1=\frac{1}{2}e^{2\omega}b_{11}=\frac{1}{2}B(\partial/\partial t,\partial/\partial t)=\frac{1}{2}u_{tt}^\perp$ and $\Omega_2=-\frac{1}{2}e^{2\omega}b_{12}=\frac{1}{2}B(\partial/\partial t,\partial/\partial \theta)=\frac{1}{2}u_{t\theta}^\perp$. Thus, in order to show that $\mathcal H\neq 0$ by Proposition~\ref{dot} and the first item of Claim 6 it suffices to prove that $|u_{tt}^\perp|^2-|u_{t\theta}^\perp|^2 \neq 0$.
We find that
\begin{gather*}
u_t=\frac{1}{r_{k,l}}(kl\cosh lt\cos l\theta, kl\cosh lt\sin l\theta, kl\sinh kt\cos k\theta, kl\sinh kt\sin k\theta),\\
u_\theta=\frac{1}{r_{k,l}}(-kl\sinh lt\sin l\theta, kl\sinh lt\cos l\theta, -kl\cosh kt\sin k\theta, kl\cosh kt\cos k\theta),\\
u_{tt}=\frac{1}{r_{k,l}}(kl^2\sinh lt\cos l\theta,kl^2\sinh lt\sin l\theta, k^2l\cosh kt\cos k\theta, k^2l\cosh kt\sin k\theta)=\\=-u_{\theta\theta},\\
u_{t\theta}=\frac{1}{r_{k,l}}(-kl^2\cosh lt\sin l\theta, kl^2\cosh lt\cos l\theta, -k^2l\sinh kt\sin k\theta, k^2l\sinh kt\cos k\theta).
\end{gather*}

Notice that 
$$
e^{\omega}=|u_\theta|=|u_t|.
$$
Hence, $e^{\omega}=|u_t|=|u_\theta|=const$ along the boundary.
Also
$$
u_\theta \cdot u_t=0,
$$
which implies
\begin{gather*}
u_{\theta\theta}\cdot u_t=-u_\theta\cdot u_{t\theta},\\
u_{t\theta}\cdot u_t=-u_\theta\cdot u_{tt}=u_\theta\cdot u_{\theta \theta}.
\end{gather*}
One may also check that
$$
u_t\cdot u_{t\theta}=0.
$$
Using the projection formula we find that
\begin{gather*}
u_{tt}^\perp=u_{tt}-\frac{u_{tt}\cdot u_\theta}{|u_\theta|^2}u_\theta-\frac{u_{tt}\cdot u_t}{|u_t|^2}u_t,\\
u_{t\theta}^\perp=u_{t\theta}-\frac{u_{t\theta}\cdot u_\theta}{|u_\theta|^2}u_\theta-\frac{u_{t\theta}\cdot u_t}{|u_t|^2}u_t.
\end{gather*}
All together implies
\begin{gather*}
|u_{tt}^\perp|^2-|u_{t\theta}^\perp|^2=|u_{tt}|^2-|u_{t\theta}|^2.
\end{gather*}
The explicit computation yields
$$
|u_{tt}^\perp|^2-|u_{t\theta}^\perp|^2 =\frac{1}{r_{k,l}^2}(k^4l^2-k^2l^4)\neq 0.
$$
\end{example}

\begin{example}\label{ExM}
Consider the Fraser-Sargent annulus which corresponds to $k=2,l=1$ in the previous example. This surface is the orientable double cover of the critical M\"obius band $\mathbb M$ in $\mathbb B^4$. Thus, in order to show that $\mathcal H\neq 0$ on $\mathbb M$ it suffices to show that $\mathcal H\neq 0$ on its orientable cover. By the previous example one sees that $|u_{tt}^\perp|^2-|u_{t\theta}^\perp|^2=12\neq 0$.
\end{example}

\begin{example}\label{ExC}
Recall that the position vector of the critical $q-$catenoid $\mathbb K_q$ is given by
$$
u(t,\theta)=\frac{1}{r_q}(\cosh (qt)\cos(q\theta), \cosh (qt)\sin (q\theta),qt), 
$$
where $q\in\mathbb N$, $r_q=\sqrt{\cosh^2(qt_{1,0})+t^2_{1,0}}$ and $t_{1,0}$ is the unique positive solution of $\coth t=t$.
Then
\begin{gather*}
u_t=\frac{q}{r_q}(\sinh (qt)\cos(q\theta), \sinh(qt)\sin(q\theta), 1),\\
u_\theta=\frac{q}{r_q}(-\cosh(qt)\sin (q\theta), \cosh (qt)\cos (q\theta), 0),\\
u_{tt}=\frac{q^2}{r_q}(\cosh (qt)\cos (q\theta),\cosh(qt)\sin(q\theta), 0)=-u_{\theta\theta},\\
u_{t\theta}=\frac{q^2}{r_q}(-\sinh(qt)\sin(q\theta), \sinh(qt)\cos(q\theta), 0).
\end{gather*}
As in Example~\ref{ExFS} one finds that
\begin{gather*}
|u_\theta|=|u_t|,\\
u_\theta \cdot u_t=0,\\
u_{\theta\theta}\cdot u_t=-u_\theta\cdot u_{t\theta},\\
u_{t\theta}\cdot u_t=-u_\theta\cdot u_{tt}=u_\theta\cdot u_{\theta \theta},\\
u_t\cdot u_{t\theta}=0.
\end{gather*}
And that
\begin{gather*}
u_{tt}^\perp=u_{tt}-\frac{u_{tt}\cdot u_\theta}{|u_\theta|^2}u_\theta-\frac{u_{tt}\cdot u_t}{|u_t|^2}u_t,\\
u_{t\theta}^\perp=u_{t\theta}-\frac{u_{t\theta}\cdot u_\theta}{|u_\theta|^2}u_\theta-\frac{u_{t\theta}\cdot u_t}{|u_t|^2}u_t,\\
|u_{tt}^\perp|^2-|u_{t\theta}^\perp|^2=|u_{tt}|^2-|u_{t\theta}|^2.
\end{gather*}
The explicit computation yields
$$
|u_{tt}^\perp|^2-|u_{t\theta}^\perp|^2=\frac{q^4}{r_q^2}\neq 0.
$$
\end{example}

Now we pass to the proof of Theorem~\ref{main_intro}.

\begin{proof}[Proof of Theorem~\ref{main_intro}] 

It follows from Claim 6 that 
$$
L(\Omega_1)=8e^{-4\omega}\left(|\Omega_1|^2-|\Omega_2|^2\right)\Omega_1, \quad L(\Omega_2)=-8e^{-4\omega}\left(|\Omega_1|^2-|\Omega_2|^2\right)\Omega_2.
$$
It was shown in Examples~\ref{ExFS}-\ref{ExC} that $\mathcal H \neq 0$. Then without loss of generality we can assume that  $\Omega\cdot\Omega=1$. Proposition~\ref{dot} and Claim 6 imply that
$$
L(\Omega_1)=8e^{-4\omega}\Omega_1, \quad L(\Omega_2)=-8e^{-4\omega}\Omega_2
$$ 
and hence
\begin{gather*}
S(\Omega_1,\Omega_1)=-\int_{\Sigma} L(\Omega_1)\cdot\Omega_1 dv_g+\int_{\partial\Sigma}(\Omega_1\cdot\nabla^\perp_\eta \Omega_1-|\Omega_1|^2)ds_g=\\=-8\int_{\Sigma}e^{-4\omega}|\Omega_1|^2dv_g+\int_{\partial\Sigma}(\Omega_1\cdot\nabla^\perp_\eta \Omega_1-|\Omega_1|^2)ds_g. 
\end{gather*}
Let $X$ be the normalized position vector field in a neighbourhood of $\partial\Sigma$. Since $\Omega\cdot\Omega=1$ one has
$$
\nabla^\perp_X\Omega\cdot\Omega=0.
$$
Substituting $\Omega=\Omega_1+i\Omega_2$ we get
$$
\nabla_\eta^\perp\Omega_1\cdot\Omega_1+i\nabla^\perp_\eta\Omega_2\cdot\Omega_1=0.
$$ 
along the boundary. Whence
$$
\nabla^\perp_\eta \Omega_1\cdot \Omega_1=0
$$
along the boundary. Therefore, one has
\begin{gather*}
S(\Omega_1,\Omega_1)=-8\int_{\Sigma}e^{-4\omega}|\Omega_1|^2dv_g-\int_{\partial \Sigma}|\Omega_1|^2ds_g=\\=-8\int_{\Sigma}e^{-4\omega}|\Omega_1|^2dv_g-\Length(\partial \Sigma)<0.
\end{gather*}

It remains to prove that the fields $\Omega_1$ and $v_i^\perp,i=\overline{1,n}$ are linearly independent and $S$ is negative definite on $span\{\Omega_1, v_1^\perp,\ldots,v_n^\perp\}$. Consider $S(\Omega_1,v^\perp_i)$. Since $v_i^\perp$ are Jacobi fields (see Proposition~\ref{vperp}) then integrating by parts yields 
$$
S(\Omega_1,v^\perp_i)=\int_{\partial \Sigma} \Omega_1\cdot(\nabla^\perp_\eta v_i^\perp-v_i^\perp) ds_g.
$$
Let $\eta=e^{-\omega}\frac{\partial u}{\partial t}$ be the outward unit normal to $\partial\Sigma$ and $\tau=e^{-\omega}\frac{\partial u}{\partial \theta}$ be a unit tangent to $\partial \Sigma$. By the projection formula
$$
v^\perp_i=v_i-(v_i\cdot \tau)\tau-(v_i\cdot\eta)\eta.
$$
Differentiating yields
$$
\nabla_\eta^\perp v_i^\perp=-(v_i\cdot\eta)b_{11}.
$$
Here we have also used that $b_{12}=0$ along the boundary by Claim 5. Coming back to our computation and using Claim 5 we get
\begin{gather*}
S(\Omega_1,v^\perp_i)=\frac{1}{2}\int_{\partial \Sigma} e^{2\omega} b_{11}\cdot(- (v_i\cdot u) b_{11}-v_i^\perp) ds_g,
\end{gather*}
since $u=\eta$ along the boundary. Then
\begin{gather*}
S(\Omega_1,v^\perp_i)=\frac{1}{2}e^{2\omega}\int_{\partial \Sigma} (-( v_i\cdot u) |b_{11}|^2- b_{11}\cdot v_i)ds_g=-2e^{-2\omega}\int_{\partial \Sigma}u^i ds_g+\frac{1}{2}e^{2\omega}\int_{\partial \Sigma}b_{22}^ids_g,
\end{gather*}
where $u^i$ is the $i-$th coordinate of $\Sigma$ and $b_{11}^i$ is the $i-$th coordinate of the vector $b_{11}$. We have also used $b_{11}\cdot v_i^\perp=b_{11}\cdot v_i$ since $b_{11}\in \Gamma(N\Sigma)$, $|b_{11}|=2e^{-2\omega}|\Omega_1|=2e^{-2\omega}$ along the boundary and $b_{11}=-b_{22}$ by the minimality of $\Sigma$. Since $\Sigma$ is a free boundary minimal surface in $\mathbb B^n$ we get that $u^i$ is a Steklov eigenfunction with Steklov eigenvalue 1. This implies that
$$
\int_{\partial\Sigma}u^ids_g=0.
$$ 
We claim that 
$$
\int_{\partial\Sigma}b^i_{22}ds_g=0.
$$
 Observe that $b_{22}$ is the geodesic curvature of $\partial \Sigma$ in $\mathbb S^{n-1}=\partial \mathbb B^n$. Indeed, let $(\partial \Sigma)_i, i=\overline{1,2}$ be the $i-$th boundary component and $p\in (\partial \Sigma)_i$. Then one has
$$
B(\partial/\partial \theta,\partial/\partial \theta)(p)=\left(\frac{\partial^2 u}{\partial \theta^2}\right)^\perp(p) \in T_p\mathbb S^{n-1}
$$
thanks to the orthogonality condition. Therefore, $\left(\frac{\partial^2 u}{\partial \theta^2}\right)^\perp$ is the geodesic curvature of $(\partial \Sigma)_i$ in $\mathbb S^{n-1}$. Passing to the tangent vector $\tau$ of the unit length we get that $(\partial \Sigma)_j$ is parametrized naturally by a parameter $s\in [0,S]$ for some $S$. Let $w(s)$ be the velocity vector along $(\partial \Sigma)_j$. Then by definition
$$
b_{22}(p)=\frac{dw}{ds}(p),~\forall p\in (\partial \Sigma)_j.
$$
One has
$$
\int_0^{s'} b^i_{22}(s)ds=w^i(s')-w^i(0),~\forall s'\in [0,S].
$$
Notice that $w^i(0)=w^i(S)$ since $(\partial \Sigma)_j$ is closed. Then
$$
\int_{(\partial \Sigma)_j}b^i_{22}ds_g=\int_0^S b^i_{22}(s)ds=0.
$$
Hence,
$$
\int_{\partial \Sigma}b^i_{22}ds_g=0~\text{and}\quad S(\Omega_1,v^\perp_i)=0,
$$
which immediately implies that $\Omega_1$ and $v^\perp_i$ are linearly independent. Then by the linearity and the fact that $S(v_i^\perp,v_j^\perp)=0, \forall i\neq j$ (see Proposition~\ref{FS}) one gets that
$$
S(\Omega_1,v^\perp)=0,~\forall v\in \mathbb R^n.
$$
Hence, $\Omega_1$ and $v^\perp$ are linearly independent. 

For any vector field $X=\alpha \Omega_1+\beta v^\perp, \alpha^2+\beta^2\neq 0$, where $v\in \mathbb R^n$ one then has
\begin{gather*}
S(X,X)=\alpha^2S(\Omega_1,\Omega_1)-2\alpha\beta S(\Omega_1,v^\perp)+\beta^2S(v^\perp,v^\perp)=\\=\alpha^2S(\Omega_1,\Omega_1)+\beta^2S(v^\perp,v^\perp)<0.
\end{gather*}
\end{proof}

We finish this section with the following theorem

\begin{theorem}\label{indFS}
The index of Fraser-Sargent annuli in $\mathbb B^4$ is at least 6 and the nullity is at least 2.
\end{theorem}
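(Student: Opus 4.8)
The plan is to obtain $\Ind(\Sigma)\geqslant 6$ in the form $5+1$, and the nullity bound by a separate standard argument. The first five negative directions are immediate: a Fraser--Sargent annulus $\Sigma\subset\mathbb B^4$ is among the surfaces to which Theorem~\ref{main_intro} applies with $n=4$, so $S$ is negative definite on $V_0:=\myspan\{\Omega_1,v_1^\perp,v_2^\perp,v_3^\perp,v_4^\perp\}$; from its proof $S(\Omega_1,v_i^\perp)=0$, and by Theorem~\ref{FS} $S(v_i^\perp,v_j^\perp)=0$ for $i\neq j$. For the nullity, the rotational Killing fields $R_{ij}=x_i\partial_{x_j}-x_j\partial_{x_i}$ of $\mathbb R^4$ generate one--parameter groups of isometries of $\mathbb B^4$, hence variations of $\Sigma$ through free boundary minimal surfaces, so each $R_{ij}^\perp$ is a Jacobi field; since $R_{ij}\cdot u\equiv 0$ and $u=\eta$ along $\partial\Sigma$, one checks $\nabla^\perp_\eta R_{ij}^\perp=R_{ij}^\perp$ on $\partial\Sigma$, whence $S(R_{ij}^\perp,\,\cdot\,)\equiv 0$. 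Reading off the explicit parametrisation, neither $R_{12}$ nor $R_{13}$ is tangent to $\Sigma$, so $R_{12}^\perp,R_{13}^\perp$ are nonzero and linearly independent and $\Nul(\Sigma)\geqslant 2$.

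For the sixth negative direction I would exploit that, unlike the critical catenoid, $\Sigma$ has genuinely rank--two normal bundle: $\Omega_2\not\equiv 0$ by Example~\ref{ExFS}, and since $|\Omega_1|^2=\Omega\cdot\Omega+|\Omega_2|^2$ is a positive constant plus $|\Omega_2|^2$, the field $\Omega_1$ is nowhere zero, so $\hat\Omega_1:=\Omega_1/|\Omega_1|$ is a global unit section of the ``Hopf line'' $\mathbb R\Omega_1\subset N\Sigma$. Consider the normal fields $X=\phi(t)\,\hat\Omega_1$ with $\phi$ a function of $t\in[-T,T]$ only. Using Claim~6 (in particular $L(\Omega_1)=8e^{-4\omega}\Omega_1$ after normalising $\Omega\cdot\Omega=1$, and $\mathcal B(\hat\Omega_1)=8e^{-4\omega}|\Omega_1|^2\hat\Omega_1$) together with $\langle\hat\Omega_1,\nabla^\perp\hat\Omega_1\rangle=0$, one finds that $S(X,X)$ reduces to the one--dimensional Steklov--Schr\"odinger quadratic form
$$
Q(\phi)=2\pi\int_{-T}^{T}\bigl((\phi')^2+V\phi^2\bigr)\,dt-2\pi\,e^{\omega(T)}\bigl(\phi(T)^2+\phi(-T)^2\bigr),\qquad V:=e^{2\omega}\bigl(|\nabla^\perp\hat\Omega_1|^2-8e^{-4\omega}|\Omega_1|^2\bigr).
$$
Because $\hat\Omega_1=\Omega_1$ on $\partial\Sigma$, and because $\int_{(\partial\Sigma)_j}u^i\,ds_g=0$ on each boundary component (the $u^i$ have pure Fourier frequency $l$ or $k$ in $\theta$) while $\int_{(\partial\Sigma)_j}b_{22}^i\,ds_g=0$ ($b_{22}$ being the geodesic curvature of $(\partial\Sigma)_j$ in $\mathbb S^3$), the computation of $S(\Omega_1,v_i^\perp)$ from the proof of Theorem~\ref{main_intro} — with the constants $\phi(\pm T)$ taken outside the boundary integral — gives $S(X,v_i^\perp)=0$ for all such $X$ and all $i$.

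The reflection symmetry of $\Sigma$ makes $V$ even in $t$ and the boundary coefficient the same at $t=\pm T$, so $Q$ splits into an even and an odd block. The even block has a negative eigenvalue, witnessed by $\phi=|\Omega_1|$, for which $X=\Omega_1$. The key claim is that the odd block has a negative eigenvalue too: for the critical catenoid this is witnessed by $\phi=\tanh t$, for which $\phi(t)\hat\Omega_1$ equals $-v_3^\perp$ and hence $Q(\tanh t)=S(v_3^\perp,v_3^\perp)<0$; for the Fraser--Sargent annuli one instead exhibits a suitable odd test function and verifies $Q(\phi)<0$ from the explicit $\omega$ and $\Omega_1$ of the family. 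Granting this, $X_{\mathrm{odd}}:=\phi_{\mathrm{odd}}(t)\hat\Omega_1$ satisfies $S(X_{\mathrm{odd}},X_{\mathrm{odd}})<0$, is $S$--orthogonal to $\Omega_1$ (opposite $t$--parity) and to each $v_i^\perp$, and does not lie in $V_0$ (else $S(X_{\mathrm{odd}},X_{\mathrm{odd}})$ would vanish); hence $S$ is negative definite on the six--dimensional space $V_0\oplus\mathbb R X_{\mathrm{odd}}$ and $\Ind(\Sigma)\geqslant 6$. Running the same scheme on the critical catenoid yields only $4=2+2$ directions, because there one of the two mode--$0$ negative directions is already $v_3^\perp$; for $\Sigma$ no $v_i^\perp$ lies in $\mathbb R\hat\Omega_1$, since $S(\phi(t)\hat\Omega_1,v_i^\perp)=0$ for all $\phi$ whereas $S(v_i^\perp,v_i^\perp)<0$.

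The only step requiring genuine work is showing that the odd block of $Q$ has a negative eigenvalue for every Fraser--Sargent annulus in $\mathbb B^4$: one must control the potential $V$ and the boundary coefficient $e^{\omega(T)}$ uniformly along the family indexed by $(k,l)$, the delicate feature being that $\hat\Omega_1$ obeys no Dirichlet condition on $\partial\Sigma$ and $e^\omega=|u_t|$ is not identically $1$ there. Everything else — the reduction to $Q$, the orthogonality relations $S(X,v_i^\perp)=0$, and the nullity bound — follows from Theorem~\ref{main_intro}, the identities of Section~\ref{tech}, and the standard behaviour of rotational Jacobi fields at the free boundary.
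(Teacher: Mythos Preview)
Your first five directions and the orthogonality relations $S(\phi(t)\hat\Omega_1,v_i^\perp)=0$ are correct, and your nullity argument via rotational Killing fields is a legitimate alternative to the paper's (the paper uses $J\Omega_2$ and $u^\perp$ instead; both give a $2$--dimensional subspace on which $S$ vanishes).

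The genuine gap is the sixth negative direction. You search for it inside the line sub-bundle $\mathbb R\hat\Omega_1=\mathbb R N_1$, reduce to the one--dimensional Robin problem $Q$, and then \emph{assume} that the odd block of $Q$ has a negative eigenvalue for every pair $(k,l)$. You yourself flag this as ``the only step requiring genuine work,'' and indeed there is no soft reason it should hold: for the catenoid the odd test function $\tanh t$ comes for free from $v_3^\perp$, but for the Fraser--Sargent annuli no $v_i^\perp$ lies in $\mathbb R N_1$, and the potential $V$ now contains the positive term $|\nabla^\perp\hat\Omega_1|^2$ competing against $8e^{-4\omega}|\Omega_1|^2$ with no uniform control over the family. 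Without an explicit test function and a verification valid for all $(k,l)$, the argument is incomplete.

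The paper sidesteps this difficulty by looking in the \emph{orthogonal} line $\mathbb R N_2$ instead. Since $N\Sigma$ is trivial one introduces the almost complex structure $J$ on $N\Sigma$ by $JN_1=N_2$, $JN_2=-N_1$, and checks (using Claim~6 and $\nabla^\perp J=J\nabla^\perp$) that $L(J\Omega_1)=0$. Thus $J\Omega_1=\cosh\mu\,N_2$ is a Jacobi field, so $S(J\Omega_1,J\Omega_1)$ reduces to the boundary integral alone; from $J\Omega\cdot J\Omega=\Omega\cdot\Omega=1$ one gets $J\Omega_1\cdot\nabla^\perp_\eta J\Omega_1=0$ exactly as for $\Omega_1$, hence $S(J\Omega_1,J\Omega_1)=-\Length(\partial\Sigma)<0$ with no ODE analysis whatsoever. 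The rest of the paper's proof is a direct verification that $\Omega_1,J\Omega_1,v_1^\perp,\dots,v_4^\perp$ are linearly independent. So the conceptual point you are missing is that the codimension--two normal bundle buys you a second Hopf--type Jacobi field in the $N_2$ direction, rather than a second radial mode in the $N_1$ direction.
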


\begin{proof}
Let $\Sigma$ be a Fraser-Sargent annulus. The normal bundle to $\Sigma$ is trivial since $\Sigma$ is orientable (see e.g.~\cite{fraser2007index}). Since $|\Omega_1|^2-|\Omega_2|^2=1$ and $\Omega_1\cdot\Omega_2=0$ then there exist global unit normal fields $N_1,N_2$ and a function $\mu$ such that $\Omega_1=\cosh\mu N_1, \Omega_2=\sinh\mu N_2$. Indeed, since $|\Omega_1| \geqslant 1$ we can set $N_1=\frac{\Omega_1}{|\Omega_1|}$. Then the field $N_2$ is defined as a unit field such that the orthogonal frame $u_t,u_\theta, N_1,N_2$ is positive oriented at every point $p\in\Sigma$. Moreover, $\Omega_2$ vanishes only on the boundary. Indeed, by Claim 5 one has
$$
\Omega_2=\frac{1}{2}B(\partial/\partial t,\partial/\partial \theta)=\frac{1}{2}u_{t\theta}^\perp.
$$
Suppose that $\Omega_2(p)=0$ for some $p\in\Sigma$. Then $u_{t\theta}^\perp(p)=0$ which implies that $u_{t\theta}(p)\in T_p\Sigma$. However, it follows from Example~\ref{ExFS} that $u_{t\theta}\cdot u_t=0$, which implies that $u_{t\theta}=\alpha u_\theta$ for some $\alpha\in\mathbb R\setminus \{0\}$. Using the explicit formulae for $u_{t\theta}$ and $u_\theta$ (see Example~\ref{ExFS} once again) then yields
$$
\begin{cases}
l\cosh lt=\alpha \sinh lt,\\
k\sinh lt=\alpha \cosh kt,\\
\end{cases}
$$
for some $t$ corresponding to the point $p$. This implies that 
$$
l\coth lt=k\tanh kt.
$$
The unique positive solution to this equation is $t=t_{k,l}$, which corresponds to the boundary of $\Sigma$. Thus, $\Omega_2=0$ only on $\partial\Sigma$. For the function $\mu$ one then has $\sinh \mu=0$ on $\partial\Sigma$. As in the proof of~\cite[Theorem 3.1 (2)]{kusner2018index} we will introduce a complex structure $J$ on $N\Sigma$ in the following way.
$$
JN_1=N_2, \quad JN_2=-N_1.
$$
By the Newlander-Nirenberg Theorem this complex structure is integrable since its Nijenhuis tensor vanishes. Note that $\nabla J=J\nabla$ and hence $\nabla^\perp J=J\nabla^\perp$. Also $J\Omega_2=0$ along the boundary of $\Sigma$. Moreover, one can show that
\begin{gather}\label{N}
\nabla^\perp_zN_1=i\mu_zN_2.
\end{gather}
Indeed, since by Claim 3 $\nabla^\perp_{\bar z}\Omega=0$ one has
\begin{gather*}
0=\nabla^\perp_{\bar z}(\Omega_1+i\Omega_2)=\nabla^\perp_{\bar z}(\cosh \mu N_1)+i\nabla^\perp_{\bar z}(\sinh\mu N_2)=\\\mu_{\bar z}\sinh\mu N_1+\cosh\mu\nabla^\perp_{\bar z}N_1+i\mu_{\bar z}\cosh\mu N_2+i\sinh\mu\nabla^\perp_{\bar z}N_2.
\end{gather*}
However, $\nabla^\perp_{\bar z}N_1 \parallel N_2$ and $\nabla^\perp_{\bar z}N_2 \parallel N_1$ as it easily follows from $|N_1|=1$ and $|N_2|=1$ by taking the covariant derivative $\nabla^\perp_{\bar z}$. Hence the previous identity implies
$$
\nabla^\perp_{\bar z}N_1=-i\mu_{\bar z}N_2.
$$
Conjugating yields~\eqref{N}.

Observe that $L(J\Omega_1)=0$. Indeed,
$$
\Delta^\perp (J\Omega_1)=J\Delta^\perp (\Omega_1)=-8e^{-4\omega}|\Omega_2|^2J\Omega_1=-8e^{-4\omega}\sinh^2\mu\cosh\mu N_2
$$
and
\begin{gather*}
\mathcal B(J\Omega_1)=\sum_{i,j=1}^2(b_{ij}\cdot J\Omega_1) b_{ij}=8e^{-4\omega}\left((\Omega_1\cdot J\Omega_1)\Omega_1+(\Omega_2\cdot J\Omega_1)\Omega_2\right)=\\=8e^{-4\omega}\sinh^2\mu\cosh\mu N_2.
\end{gather*}
We have used Claims 5 and 6 in both computations. Similarly, one can show that $L(J\Omega_2)=0$. Thus,
$$
S(J\Omega_1,J\Omega_1)=\int_{\partial \Sigma}(J\Omega_1\cdot\nabla^\perp_\eta J\Omega_1 -|J\Omega_1|^2)ds_g,\quad S(J\Omega_2,J\Omega_2)=0.
$$
Note also that $J\Omega\cdot J\Omega=\Omega\cdot\Omega=1$ and $|J\Omega_1|^2=|\Omega_1|^2=1$ along $\partial \Sigma$. As in the proof of Theorem~\ref{main_intro} one concludes that $J\Omega_1\cdot\nabla^\perp_\eta J\Omega_1=0$. Thus, $S(J\Omega_1,J\Omega_1)<0$.


We claim that the fields $\Omega_1, J\Omega_1, v_1^\perp,\ldots, v_4^\perp$ are linearly independent. Indeed, suppose that 
$$
\alpha\Omega_1+\beta J\Omega_1+v^\perp=0
$$
for some $v\in\mathbb R^4$ and $\alpha,\beta\in \mathbb R$. Applying the operator $\nabla^\perp_z$ one then gets
$$
\alpha\nabla^\perp_z\Omega_1+\beta J\nabla^\perp_z\Omega_1+\nabla^\perp_zv^\perp=0.
$$
Simplifying and using~\eqref{N} we get
\begin{gather}\label{nabla_z}
\alpha\mu_z\sinh\mu N_1+i\alpha\mu_z\cosh\mu N_2+\beta\mu_z\sinh\mu N_2-i\beta\mu_z\cosh\mu N_1-\notag \\-2e^{-2\omega}(v\cdot u_{\bar z})\cosh\mu N_1-2ie^{-2\omega}(v\cdot u_{\bar z})\sinh\mu N_2=0.
\end{gather}
Here we have also used the projection formula
$$
v^\perp=v-\frac{v\cdot u_z}{|u_z|^2}u_{\bar z}-\frac{v\cdot u_{\bar z}}{|u_{\bar z}|^2}u_z
$$
in the computation of $\nabla^\perp_zv^\perp$.

Since the fields $N_1$ and $N_2$ are linearly independent then~\eqref{nabla_z} implies 
$$
\begin{cases}
\alpha\mu_z\sinh\mu-i\beta\mu_z\cosh\mu-2e^{-2\omega}(v\cdot u_{\bar z})\cosh\mu=0,\\
i\alpha\mu_z\cos\mu+\beta\mu_z\sinh\mu-2ie^{-2\omega}(v\cdot u_{\bar z})\sinh\mu=0.
\end{cases}
$$
The second identity restricted on $\partial\Sigma$ implies that either $\mu_z=0$ on $\partial\Sigma$ or $\alpha=0$. If $\mu_z=0$ on $\partial\Sigma$ then the first identity restricted on $\partial\Sigma$ implies that $v\cdot u_{\bar z}=0$ on $\partial\Sigma$. The latter yields that $v\cdot u_z=0$ on $\partial\Sigma$, i.e. $v\perp T_p\Sigma$ for any $p\in\partial\Sigma$. In other words, $v\cdot u_t=0$ and $v\cdot u_\theta=0$ along $\partial\Sigma$. This cannot hold for all points on $\partial\Sigma$ and $v\neq 0$. Explicitly, one can take $(t,\theta)$ from the set $\{(\pm t_{k,l},0)\}$ and check that the system
$$
\begin{cases}
v\cdot u_t=0,\\
v\cdot u_\theta=0
\end{cases}
$$
admits only the trivial solution $v=0$. But if $v=0$ then one gets $\alpha\Omega_1+\beta J\Omega_1=0$ which can happen if and only if $\alpha=\beta=0$.

Now let us consider the case when $\alpha=0$. This implies that $\beta J\Omega_1+v^\perp=0$. Since $J\Omega_1=\cosh\mu N_2$ we get that $v^\perp\cdot N_1=0$ which implies $v\cdot N_1=0$. Notice that
$$
\Omega_1=\frac{1}{2}B(\partial/\partial t,\partial/\partial t)=\frac{1}{2}u_{tt}^\perp.
$$
Hence $N_1=\frac{u_{tt}^\perp}{|u_{tt}^\perp|}$. Then $v\cdot N_1=0$ implies that $v\cdot u_{tt}^\perp=0$. Using the computations in~\ref{ExFS} we find that
$$
u_{tt}^\perp=u_{tt}-\frac{u_{tt}\cdot u_t}{|u_t|^2}u_t.
$$
Computing $v\cdot u_{tt}^\perp$ at the points $(t,\theta)$ from the set $\{(0,0),(t_{k,l},0),(t_{k,l},\frac{\pi}{2l}), (t_{k,l},\frac{\pi}{2k})\}$ we get that the identity $v\cdot u_{tt}^\perp=0$ holds if and only if the vector $v=(v_1,\ldots,v_4)$ is zero as the unique solution of the following system
\begin{gather*}
\begin{cases}
kv_3=0,\\
\big(l\sinh(lt_{k,l})-\frac{1}{2}\frac{l\sinh(2lt_{k,l})+k\sinh(2kt_{k,l})}{\cosh^2(lt_{k,l})+\sinh^2(kt_{k,l})}\cosh(lt_{k,l})\big)v_1=0,\\
\big(l\sinh(lt_{k,l})-\frac{1}{2}\frac{l\sinh(2lt_{k,l})+k\sinh(2kt_{k,l})}{\cosh^2(lt_{k,l})+\sinh^2(kt_{k,l})}\cosh(lt_{k,l})\big)v_2+\\+\sin(\frac{\pi}{2}\frac{k}{l})\big(k\cosh(kt_{k,l})-\frac{1}{2}\frac{l\sinh(2lt_{k,l})+k\sinh(2kt_{k,l})}{\cosh^2(lt_{k,l})+\sinh^2(kt_{k,l})}\sinh(kt_{k,l})\big)v_4=0,\\
\sin(\frac{\pi}{2}\frac{l}{k})\big(l\sinh(lt_{k,l})-\frac{1}{2}\frac{l\sinh(2lt_{k,l})+k\sinh(2kt_{k,l})}{\cosh^2(lt_{k,l})+\sinh^2(kt_{k,l})}\cosh(lt_{k,l})\big)v_2+\\+\big(k\cosh(kt_{k,l})-\frac{1}{2}\frac{l\sinh(2lt_{k,l})+k\sinh(2kt_{k,l})}{\cosh^2(lt_{k,l})+\sinh^2(kt_{k,l})}\sinh(kt_{k,l})\big)v_4=0.
\end{cases}
\end{gather*}
Notice that $l\sinh(lt_{k,l})-\frac{1}{2}\frac{l\sinh(2lt_{k,l})+k\sinh(2kt_{k,l})}{\cosh^2(lt_{k,l})+\sinh^2(kt_{k,l})}\cosh(lt_{k,l}) \neq 0$ and $k\cosh(kt_{k,l})-\frac{1}{2}\frac{l\sinh(2lt_{k,l})+k\sinh(2kt_{k,l})}{\cosh^2(lt_{k,l})+\sinh^2(kt_{k,l})}\sinh(kt_{k,l})\neq 0$ because otherwise we get that $l\tanh(lt_{k,l})=k\coth(kt_{k,l})$ which is impossible since $l\coth(lt_{k,l})=k\tanh(kt_{k,l})$ and $k>l$. This implies that $\beta$ is also $0$. We have shown that the fields $\Omega_1, J\Omega_1, v_1^\perp,\ldots, v_4^\perp$ are linearly independent and the first statement of the theorem is proved. 


Now let us prove that $\Nul(\Sigma)\geqslant 2$. As we have already seen $S(J\Omega_2,J\Omega_2)=0$. Moreover, it is known that the field $u^\perp$ is a Jacobi field vanishing on $\partial\Sigma$. Hence, $S(u^\perp,u^\perp)=0$. Our aim is to show that the fields $J\Omega_2$ and $u^\perp$ are linearly independent. Assume the contrary, i.e. there exists a real number $\alpha\neq 0$ such that $u^\perp=\alpha J\Omega_2$. One can see that
$$
N_1=\frac{u_{tt}^\perp}{|u_{tt}^\perp|}~\text{and}~N_2=\frac{u_{t\theta}^\perp}{|u_{t\theta}^\perp|}
$$
inside $\Sigma$. Hence
$$
\cosh\mu=\frac{1}{2}|u_{tt}^\perp|~\text{and}~\sinh\mu=\frac{1}{2}|u_{t\theta}^\perp|.
$$
Therefore,
$$
J\Omega_2=-\sinh\mu N_1=-\frac{1}{2}\frac{|u_{t\theta}^\perp|}{|u_{tt}^\perp|}u_{tt}^\perp
$$
and the assumption $u^\perp=\alpha J\Omega_2$ implies that
\begin{gather}\label{aim}
u^\perp\cdot u_{tt}^\perp=-\frac{\alpha}{2}|u_{t\theta}^\perp||u_{tt}^\perp|.
\end{gather}
An explicit computation yields that
\begin{gather*}
u^\perp\cdot u_{tt}^\perp=\frac{kl}{r_{k,l}^2}(kl(\sinh^2lt+\cosh^2kt)-\frac{A}{2}(l\sinh2lt+k\sinh2kt)-\\-\frac{B}{2}(k\sinh2lt+l\sinh2kt)+AB(\cosh^2lt+\sinh^2kt)),\\
|u_{tt}^\perp|=\frac{kl}{r_{k,l}^2}\sqrt{l^2\sinh^2lt+k^2\cosh^2kt-B(l\sinh2lt+k\sinh2kt)+B^2(\cosh^2lt+\sinh^2kt)},\\
|u_{t\theta}^\perp|=\frac{kl}{r_{k,l}^2}\sqrt{l^2\cosh^2lt+k^2\sinh^2kt-B(l\sinh2lt+k\sinh2kt)+B^2(\cosh^2lt+\sinh^2kt)},
\end{gather*}
where 
$$
A=\frac{k\sinh2lt+l\sinh2kt}{2(\cosh^2lt+\sinh^2kt)}~\text{and}~B=\frac{l\sinh2lt+k\sinh2kt}{2(\cosh^2lt+\sinh^2kt)}.
$$
Plugging the above expressions into~\eqref{aim} and performing a tedious computation yield that $\alpha$ cannot be constant whenever $k\neq l$. We arrive at a contradiction.
\end{proof}

\medskip

\section{Proof of Theorem~\ref{indexES}}\label{ES}

 In this section we prove Theorem~\ref{indexES} and deduce some corollaries.

\begin{proof}[Proof of Theorem~\ref{indexES}] The proof is a straightforward adaptation of \cite[Theorem 3.3]{karpukhin2021index} for the Steklov setting. For the sake of completeness we give it here.

Let $V$ be the maximal negative space of the form $S_S$, i.e. $\dim V=\Ind_S(\Sigma)$. Suppose that 
$$
n \Ind_S(\Sigma) <\Ind_E(\Sigma).
$$
Then there exists a harmonic vector field $X$ such that $S_E(X,X)<0$ but the components $X^i, i=\overline{1,n}$ of $X$ are perpendicular to any function $f\in V$. Then $S_S(X^i,X^i) \geqslant 0, i=\overline{1,n}$. However, one can see that
$$
\sum_{i=1}^nS_S(X^i,X^i)=S_E(X,X) \geqslant 0.
$$ 
We arrive at a contradiction.
\end{proof}

Notice that in the previous theorem the orientability assumption is not needed. Combining Theorem~\ref{indexES} with Theorem~\ref{lima} one gets the following corollary

\begin{corollary}\label{cor}
Let $\Sigma$ be a (orientable or non-orientable) free boundary minimal surface in $\mathbb B^n$. Then
$$
\Ind(\Sigma) \leqslant n\Ind_S(\Sigma)+\dim \mathcal M(\Sigma).
$$
\end{corollary}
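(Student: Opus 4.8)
The plan is simply to chain the two main inequalities already at our disposal, so the proof is a one-line deduction. Both Theorem~\ref{lima} and Theorem~\ref{indexES} are stated for an arbitrary free boundary minimal immersion of a surface (orientable or not) into $\mathbb B^n$, so no separate treatment of the non-orientable case is required here.

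First I would invoke Theorem~\ref{lima}, which gives
$$
\Ind(\Sigma)\leqslant \Ind_E(\Sigma)+\dim\mathcal M(\Sigma).
$$
Next I would apply Theorem~\ref{indexES}, which bounds the energy index by $n$ times the spectral index:
$$
\Ind_E(\Sigma)\leqslant n\,\Ind_S(\Sigma).
$$
Substituting the second inequality into the first yields
$$
\Ind(\Sigma)\leqslant n\,\Ind_S(\Sigma)+\dim\mathcal M(\Sigma),
$$
which is exactly the asserted bound.

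Since the corollary is an immediate formal consequence of two theorems proved earlier, there is no genuine obstacle. The only point worth double-checking is bookkeeping: that the hypothesis of Corollary~\ref{cor} — a free boundary minimal surface in $\mathbb B^n$ — matches the hypotheses of both ingredients, and in particular that neither Theorem~\ref{indexES} nor Theorem~\ref{lima} secretly uses orientability. This is noted explicitly in the discussion preceding those statements, so the chaining is legitimate and the argument is complete.
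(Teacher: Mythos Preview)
Your proof is correct and matches the paper's approach exactly: the corollary is obtained by combining Theorem~\ref{lima} with Theorem~\ref{indexES}, and your observation that neither ingredient requires orientability is also noted in the paper.
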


One can also extract the following corollary which could be of independent interest

\begin{corollary}\label{n_and_3}
Let $\Sigma$ be a free boundary minimal surface in $\mathbb B^n$ different from the plane disk. Then its spectral index satisfies
$$
n\Ind_S(\Sigma)+\dim \mathcal M(\Sigma) \geqslant n.
$$
Moreover, if $n=3$ then
$$
3\Ind_S(\Sigma)+\dim \mathcal M(\Sigma) \geqslant 4.
$$
\end{corollary}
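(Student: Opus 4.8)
The plan is to combine the two‑sided control on the Morse index that is already available: the lower bounds of Fraser--Schoen (Theorem~\ref{FS}) and of Devyver (Theorem~\ref{devyver}), together with the upper bound furnished by Corollary~\ref{cor}. No new geometry is needed; the statement is a bookkeeping of these inequalities.

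First I would dispose of the general case. Since $\Sigma$ is not the plane disk, Theorem~\ref{FS} gives $\Ind(\Sigma)\geqslant n$. On the other hand, Corollary~\ref{cor} asserts $\Ind(\Sigma)\leqslant n\Ind_S(\Sigma)+\dim\mathcal M(\Sigma)$. Chaining these two inequalities yields $n\Ind_S(\Sigma)+\dim\mathcal M(\Sigma)\geqslant n$, which is the first claim.

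For $n=3$ I would observe that a free boundary minimal surface in $\mathbb B^3$ is a hypersurface, and that in this dimension ``different from the plane disk'' is the same condition as ``non‑flat'': a flat minimal surface is (a piece of) an affine plane, and the free boundary condition forces this plane through the origin, so the surface is an equatorial disk. Hence Theorem~\ref{devyver} applies with $n=3$ and gives $\Ind(\Sigma)\geqslant 4$. Combining with Corollary~\ref{cor} specialized to $n=3$, namely $\Ind(\Sigma)\leqslant 3\Ind_S(\Sigma)+\dim\mathcal M(\Sigma)$, we get $3\Ind_S(\Sigma)+\dim\mathcal M(\Sigma)\geqslant 4$.

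Since everything reduces to quoting earlier results, there is no genuine obstacle. The only point that deserves an explicit word in the write‑up is the identification, when $n=3$, of ``non‑flat'' with ``different from the plane disk'', which is what allows Devyver's hypersurface theorem to be invoked.
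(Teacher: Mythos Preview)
Your proof is correct and follows exactly the paper's own argument: combine the lower bound $\Ind(\Sigma)\geqslant n$ from Theorem~\ref{FS} (respectively $\Ind(\Sigma)\geqslant 4$ from Theorem~\ref{devyver} when $n=3$) with the upper bound of Corollary~\ref{cor}. The extra sentence identifying ``non-flat'' with ``not the plane disk'' in $\mathbb B^3$ is a welcome clarification that the paper leaves implicit.
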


\begin{proof}
The corollary immediately follows from Theorems~\ref{FS} and Corollary~\ref{cor}. If $n=3$ then we use Theorem~\ref{devyver} in place of Theorem~~\ref{FS}.
\end{proof}

We finish this section with the proof of Theorem~\ref{+n}.

\begin{proof}[Proof of Theorem~\ref{+n}]
Since $\Sigma$ is a free boundary minimal hypersurface in $\mathbb B^n$ then the coordinate functions $u_1,\ldots,u_n$ are Steklov eigenfunctions with eigenvalue 1. Note that  $u_1,\ldots,u_n$ are linearly independent as soon as $\Sigma$ is not flat. Suppose that $\Ind_S(\Sigma)=k$, i.e. there are $k$ linearly independent Steklov eigenfunctions $\varphi_1,\ldots,\varphi_k$ with eigenvalues $\sigma_i<1, i=\overline{1,k}$ respectively. Without loss of generality one can assume that $\varphi_1,\ldots,\varphi_k$ are orthonormal with respect to the $L^2(\partial\Sigma)-$norm. Consider $V=span\{\varphi_1,\ldots,\varphi_k,u_1,\ldots,u_n\}$. One can see that $\dim V=k+n$. We claim that the index form $S$ is negative definite on $V$. Indeed, let $\psi\in V$, i.e. $\psi=\sum_{i=1}^k\alpha_i\varphi_i+\sum_{j=1}^n\beta_ju_j$. Since $\Sigma$ is a hypersurface then the index form $S$ on $\psi$ reads:
\begin{gather}\label{form}
S(\psi,\psi)=-\int_\Sigma (\Delta_g\psi+|B|^2\psi)\psi dv_g+\int_{\partial\Sigma}\left(\frac{\partial \psi}{\partial \eta}-\psi\right)\psi ds_g.
\end{gather}
Obviously, $\Delta_g\psi=0$, since it's a linear combination of Steklov eigenfunctions. Moreover,  
$$
\frac{\partial \psi}{\partial \eta}=\sum_{i=1}^k\alpha_i\sigma_i\varphi_i+\sum_{j=1}^n\beta_ju_j~\text{on $\partial\Sigma$}.
$$
One may easily check that
\begin{gather}\label{form1}
\int_{\partial\Sigma}\frac{\partial \psi}{\partial \eta}\psi ds_g=\Length(\partial\Sigma)\sum_{i=1}^k\alpha^2_i\sigma_i+\int_{\partial\Sigma}\left(\sum_{j=1}^n\beta_ju_j\right)^2ds_g.
\end{gather}
Similarly,
\begin{gather}\label{form2}
\int_{\partial\Sigma}\psi^2 ds_g=\Length(\partial\Sigma)\sum_{i=1}^k\alpha^2_i+\int_{\partial\Sigma}\left(\sum_{j=1}^n\beta_ju_j\right)^2ds_g.
\end{gather}
Plugging~\ref{form1} and~\ref{form2} into~\ref{form} one gets that $S(\psi,\psi)<0$ as soon as $\Sigma$ is not flat since $\sigma_i<1,i=\overline{1,k}$. Therefore,
$$
\Ind(\Sigma)\geqslant k+n=\Ind_S(\Sigma)+n.
$$
\end{proof}

\section{Proof of Theorem~\ref{mobius_intro}} \label{mobius}

Our strategy is as follows. We pass to the orientable cover of $\mathbb M$ which correspond to the Fraser-Sargent surface with $k=2,l=1$. Let's denote this cover by $\widetilde{\mathbb M}$. Then by Theorem~\ref{main_intro} the fields $\Omega_1$ and $v_i^\perp, i=\overline{1,4}$ contribute to the index of  $\widetilde{\mathbb M}$. We need to show that the fields $\Omega_1$ and $v_i^\perp, i=\overline{1,4}$ descend to $\mathbb M$. This will imply that $\Ind(\mathbb M) \geqslant 5$. In order to get the inverse inequality we will then apply Corollary~\ref{cor}.

Recall that the position vector of $\mathbb M$ is given by
$$
u(t,\theta)=(2\sinh t\cos\theta, 2\sinh t\sin \theta, \cosh 2t\cos 2\theta, \cosh 2t\sin 2\theta)
$$
and 
\begin{gather*}
u_t=(2\cosh t\cos\theta, 2\cosh t\sin \theta, 2\sinh 2t\cos 2\theta, 2\sinh 2t\sin 2\theta),\\
u_\theta=(-2\sinh t\sin \theta, 2\sinh t\cos \theta, -2\cosh 2t\sin 2\theta, 2\cosh 2t\cos 2\theta),\\
u_{tt}=(2\sinh t\cos \theta,2\sinh t\sin \theta, 4\cosh 2t\cos2\theta, 4\cosh2t\sin 2\theta)=-u_{\theta\theta},\\
u_{t\theta}=(-2\cosh t\sin\theta, 2\cosh t\cos\theta, -4\sinh 2t\sin 2\theta, 4\sinh 2t\cos 2\theta),\\
u_{tt}^\perp=u_{tt}-\frac{u_{tt}\cdot u_\theta}{|u_\theta|^2}u_\theta-\frac{u_{tt}\cdot u_t}{|u_t|^2}u_t.
\end{gather*}

One may easily check that
\begin{gather*}
u_\theta(t,\theta)=u_\theta(-t,\theta+\pi),\\
u_t(t,\theta)=-u_t(-t,\theta+\pi),\\
u_{tt}(t,\theta)=u_{tt}(-t,\theta+\pi),\\
u_{t\theta}(t,\theta)=-u_{t\theta}(-t,\theta+\pi).\\
\end{gather*}
We are interested in vector fields $X$ that satisfy the condition $X(t,\theta)=X(-t,\theta+\pi)$. Obviously, a tangent vector field $X=au_t+bu_\theta$ satisfies the condition $X(t,\theta)=X(-t,\theta+\pi)$ if and only if
\begin{gather*}
a(-t,\theta+\pi)=-a(t,\theta),\\
b(-t,\theta+\pi)=b(t,\theta).\\
\end{gather*}
A straightforward computation yields that
 \begin{gather*}
u_{tt}^\perp(-t,\theta+\pi)=u_{tt}^\perp(t,\theta).
\end{gather*}
Therefore, the field $u_{tt}^\perp$ descends to a field on $\mathbb M$. Hence, $\Omega_1$ descends to $\mathbb M$.

Observing that the fields $v_i^\perp, i=\overline{1,4}$ also satisfy the condition $X(t,\theta)=X(-t,\theta+\pi)$ we conclude that $\Ind(\mathbb M) \geqslant 5$. 

In order to get the inverse inequality we observe that since $\mathbb M$ is given by first Steklov eigenfunctions then $\Ind_S(\mathbb M)=1$. Furthermore, the moduli space of conformal structures on the M\"obius band is isomorphic to the ray $\mathbb R_{>0}$. Hence, $\dim\mathcal M(\mathbb M)=1$ and by Corollary~\ref{cor} one has $\Ind(\mathbb M) \leqslant 5$. Thus, $\Ind(\mathbb M)=5$.

\subsection{Another proof of Theorem~\ref{cat}} \label{Cat}

 By Theorem~\ref{main_intro} (see also Theorem~\ref{devyver}) the index of $\mathbb K$ is at least 4. Since $\mathbb K$ is given by first Steklov eigenfunctions then we get that $\Ind_S(\mathbb K)=1$.  The moduli space of conformal structures on the annulus is isomorphic to the ray $\mathbb R_{>0}$ hence $\dim\mathcal M(\mathbb K)=1$. Then Corollary~\ref{cor} implies that $\Ind(\mathbb K) \leqslant 4$. Thus, $\Ind(\mathbb K)=4$.

 \section{Appendix}\label{appendix}
 In this section we prove Theorem~\ref{lima}. The proof follows from the same steps as the proofs of Propositions 6.5 and 7.3 in~\cite{fraser2016sharp}.
 
 Let us recall that a vector field $Y$ on $\mathbb B^n$ is said to be \textit{conformal} if for any local orthonormal basis $\{e_1,e_2\}$ in $\Gamma(T\Sigma)$ one has
 $$
 \nabla_{e_1}Y\cdot e_2=-\nabla_{e_2}Y\cdot e_1,\quad  \nabla_{e_1}Y\cdot e_1=\nabla_{e_2}Y\cdot e_2.
 $$
 
 \begin{remark}
 If $Y$ is a conformal vector field on $\mathbb B^n$ then for any tangent vector field $X$ on $\Sigma$ one has that $\nabla_XY\cdot X=const$.
 \end{remark}
 
 The following lemma reveals the importance of conformal vector fields
 
 \begin{lemma}[Fraser-Schoen~\cite{fraser2016sharp}]\label{FSapp}
 If $Y$ is a conformal vector field then for the quadratic forms of the second variations of the energy and volume functionals one has
 $$
 S_E(Y,Y)=S(Y^\perp,Y^\perp).
 $$
 \end{lemma}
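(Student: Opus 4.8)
The plan is to prove the identity variationally, exploiting the fact that in dimension two the Dirichlet energy of a conformal map equals the area of its image, together with the fact that $u$ is a free boundary minimal immersion.

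First I would let $\phi_s$ be the local one-parameter family of conformal transformations of $\mathbb B^n$ generated by $Y$; these preserve $\partial\mathbb B^n=\mathbb S^{n-1}$, so $u_s:=\phi_s\circ u\colon\Sigma\to\mathbb B^n$ is a family of immersions with $u_0=u$, $u_s(\partial\Sigma)\subset\mathbb S^{n-1}$, and variation field $\tfrac{d}{ds}\big|_{s=0}u_s=Y$ along $u$. Since $u$ is conformal (it is an isometric immersion for the induced metric $g$) and each $d\phi_s$ is a conformal linear map, every $u_s$ is a conformal immersion of $(\Sigma,g)$, so $u_s^*\delta=\lambda_s\,g$ for a positive function $\lambda_s$; then the energy density $\tfrac12|\nabla u_s|^2=\lambda_s$ and the area density $\sqrt{\det(u_s^*\delta)/\det g}=\lambda_s$ coincide, whence
\[
E(u_s)=\Area(u_s)\qquad\text{for all }s,
\]
where $E(f)=\tfrac12\int_\Sigma|\nabla f|^2\,dv_g$ is the Dirichlet energy.

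Next I would differentiate this identity twice at $s=0$. Since $u$ is harmonic, integrating the first variation of $E$ by parts leaves only a boundary term; using the free boundary condition $\nabla_\eta u=u$ on $\partial\Sigma$, the acceleration $\tfrac{\partial^2}{\partial s^2}\big|_0 u_s=\nabla_YY$ along $u$, and the fact that $Y$ is tangent to $\mathbb S^{n-1}$ there (so $\langle\nabla_YY,u\rangle=-|Y|^2$ on $\partial\Sigma$), one obtains $\tfrac{d^2}{ds^2}\big|_0 E(u_s)=\int_\Sigma|\nabla Y|^2\,dv_g-\int_{\partial\Sigma}|\nabla_\eta u||Y|^2\,ds_g=S_E(Y,Y)$. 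On the area side, $\Sigma$ is minimal and meets $\mathbb S^{n-1}$ orthogonally, so $u$ is a critical point of the free-boundary area functional and its Hessian depends only on the normal component of the variation field: the tangential component generates a reparametrization of $\Sigma$ together with a sliding of $\partial\Sigma$ within $\mathbb S^{n-1}$ (note $Y^\top\perp\eta=u$ on $\partial\Sigma$, so $Y^\top$ is tangent to both $\partial\Sigma$ and $\mathbb S^{n-1}$ there), which is a null direction. Hence $\tfrac{d^2}{ds^2}\big|_0\Area(u_s)=S(Y^\perp,Y^\perp)$, and comparing the two computations yields $S_E(Y,Y)=S(Y^\perp,Y^\perp)$.

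The step requiring the most care is the precise identification of the two second derivatives with the quadratic forms $S_E$ and $S$ of the paper — checking that the second variation of $E$ along $u_s$ really produces the weight $|\nabla_\eta u|$ in $S_E$, and that the second variation of area along $u_s$ really produces the term $-\int_{\partial\Sigma}|Y^\perp|^2$ of $S$, both of which trace back to the umbilicity of $\mathbb S^{n-1}$ in $\mathbb R^n$ and the identity $\eta=u$ on $\partial\Sigma$. Alternatively, one can argue computationally: decomposing $Y=Y^\top+Y^\perp$ and expanding $|\nabla Y|^2$ by the Gauss and Weingarten formulas as $|\nabla^\perp Y^\perp|^2+\langle\mathcal B(Y^\perp),Y^\perp\rangle+|\nabla Y^\top|^2+2\,\nabla Y^\top\!\cdot\nabla Y^\perp$, integrating the normal-bundle term by parts reduces the lemma to the identity $\int_\Sigma\big(|\nabla Y^\top|^2+2\,\nabla Y^\top\!\cdot\nabla Y^\perp+2\langle\mathcal B(Y^\perp),Y^\perp\rangle\big)\,dv_g=\int_{\partial\Sigma}|Y^\top|^2\,ds_g$, which one proves by writing the integrand as $\operatorname{div}_\Sigma$ of the tangential field $X\mapsto(\nabla_XY)\cdot Y^\top$ up to a remainder built from the antisymmetric and trace-free symmetric parts of $\nabla Y|_{T\Sigma}$ (zero precisely because $Y$ is conformal), then applying the divergence theorem with $\eta=u$ on $\partial\Sigma$. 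In this route the main obstacle is the Gauss--Weingarten and Codazzi bookkeeping needed to see that conformality is exactly what kills the non-divergence remainder.
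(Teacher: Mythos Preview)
The paper does not supply its own proof of this lemma; it simply attributes it to Fraser--Schoen \cite{fraser2016sharp} and moves on, so there is no in-paper argument to compare against directly. What matters, then, is whether your argument establishes the lemma in the generality in which the paper states and \emph{uses} it.

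Your primary route has a genuine gap. You assume that $Y$ integrates to a one-parameter family $\phi_s$ of conformal transformations of $\mathbb B^n$ preserving $\mathbb S^{n-1}$. But the paper's definition of ``conformal'' is far weaker: it only asks that $\nabla_{e_1}Y\cdot e_2+\nabla_{e_2}Y\cdot e_1=0$ and $\nabla_{e_1}Y\cdot e_1=\nabla_{e_2}Y\cdot e_2$ \emph{along $\Sigma$}, i.e.\ that the first-order variation of the induced metric is a multiple of $g$. Such a $Y$ is in general only a section of $u^*T\mathbb R^n$ over $\Sigma$, and even if extended to $\mathbb B^n$ it need not be conformal Killing there. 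This matters because in the proof of Theorem~\ref{lima} the lemma is applied to the field $\tilde Y=\tilde X+\tilde\xi$, which is manufactured by solving the $\bar\partial$-problem~\eqref{problem} on $\widetilde\Sigma$ and is certainly not a conformal Killing field of the ball. So the variational argument via global conformal maps of $\mathbb B^n$ proves only a special case and does not cover the use the paper makes of the lemma.

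Your alternative computational route is the correct one, and it is essentially Fraser--Schoen's own argument: decompose $Y=Y^\top+Y^\perp$, expand $|\nabla Y|^2$ via Gauss--Weingarten, integrate the $|\nabla^\perp Y^\perp|^2$ term by parts to match $S(Y^\perp,Y^\perp)$, and then show that the leftover
\[
\int_\Sigma\big(|\nabla Y^\top|^2+2\,\nabla Y^\top\!\cdot\nabla Y^\perp+2\langle\mathcal B(Y^\perp),Y^\perp\rangle\big)\,dv_g-\int_{\partial\Sigma}|Y^\top|^2\,ds_g
\]
vanishes. The point you correctly identify is that this residual is a divergence plus a term built from the trace-free symmetric part of $e_i\mapsto (\nabla_{e_i}Y)^\top$, which is exactly what the conformality hypothesis kills; carrying out that Codazzi/Gauss bookkeeping (as in \cite{fraser2016sharp}) completes the proof in the generality required. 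I would drop the first approach entirely and present only this one.
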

  
 \begin{proof}[Proof of Theorem~\ref{lima}] We will provide a proof for the case of non-orientable free boundary minimal surfaces. The proof for the case of orientable free boundary minimal surfaces is easier and follows from the same steps. 
 
 Let $(x,y)$ be isothermal coordinates on $\Sigma$ such that $\partial_y$ is tangent to $\partial\Sigma$ and $z=x+iy$ be the corresponding complex coordinate. As before $\Sigma$ is given by the immersion $u\colon \Sigma \to \mathbb B^n$. Let $V$ be the maximal negative space of the form $S$, i.e. $\dim V=\Ind(\Sigma)$. Consider $\xi\in V$ and $X\in\Gamma(T\Sigma)$. If we will pass to the orientable cover $\widetilde{\Sigma}$ of $\Sigma$ then the fields $\xi$ and $X$ lift to the vector fields $\tilde\xi \in \Gamma(N\Sigma)$ and $\tilde X\in\Gamma(T\Sigma)$ respectively which are invariant under the involution $\iota$ changing the orientation. In this case $\widetilde\Sigma$ is given by the $\iota-$ invariant immersion $\tilde u\colon \widetilde\Sigma\to\mathbb B^n$.
 
 Consider the vector field $\tilde Y=\tilde X+\tilde\xi$. Let's suppose that this field is conformal for some $\tilde \xi$ which form a vector space $U\subset V$. Then by Lemma~\ref{FSapp} one has
 $$
 S_E(\tilde Y,\tilde Y)=S(\tilde\xi,\tilde\xi).
 $$
 The latter would imply that the fields $\tilde Y$ and $\tilde\xi$ descend to the fields $Y$ and $\xi$ on $\Sigma$ with the property
  $$
  S_E( Y, Y)=S(\xi,\xi).
 $$
 Therefore, one would get that
 $$
 \Ind_E(\Sigma) \geqslant \dim U.
 $$
 We will show that $\dim U \geqslant \dim V-\dim \mathcal M(\Sigma)$. In other words, for at least $\dim \mathcal M(\Sigma)$-codimensional subspace of $V$ one can find a tangent vector field $X$ such that the field $Y=X+\xi$ is conformal.
 
 The condition that the vector field $\tilde Y$ is conformal reads as
$$
 \nabla_{\partial_x}\tilde Y\cdot \tilde u_y=-\nabla_{\partial_y}\tilde Y\cdot \tilde u_x,\quad  \nabla_{\partial_x}\tilde Y\cdot \tilde u_x=\nabla_{\partial_y}\tilde Y\cdot \tilde u_y.
$$
 In terms of the complex coordinate $z$ the previous equations become
 \begin{gather}\label{conformal}
  \nabla_{ z}\tilde Y\cdot \tilde u_{ z}=0.
 \end{gather} 
 Here we have also used that the field $\tilde Y$ is real.
 
 Now substitute $\tilde Y=\tilde X+\tilde\xi$ into \eqref{conformal}. Simplifying, we get 
 \begin{gather}\label{conformal10}
D^{1,0}\tilde X^{0,1}=-(\nabla^{1,0} \tilde\xi)^\top,
  \end{gather} 
where $D^{1,0}=\nabla^\top_{z}\otimes d{ z}, \nabla^{1,0}=\nabla_{z}\otimes d{ z}$, $\tilde X^{1,0}$ and $\tilde X^{0,1}$ are the components of $\tilde X$ expressed in the complex coordinate $z$ such that $\tilde X^{1,0} \in span\{\tilde u_z\}$ and $\tilde X^{0,1} \in span\{\tilde u_{\bar z}\}$ i.e. $\tilde X=\tilde X^{1,0}+\tilde X^{0,1}$. In order to get formula \eqref{conformal10} we have also used Claim 1 which yields
$$
\nabla_{z}\tilde X^{1,0}\cdot \tilde u_{z}=0
$$ 
and
$$
\nabla_{z}\tilde\xi\cdot \tilde u_{\bar z}=-\tilde\xi\cdot \tilde u_{z\bar z}=0.
$$ 
Notice also that the field $X$ has to be admissible, i.e. tangent to $\partial \mathbb B^n$. This yields that $X=\varphi u_y$ along $\partial \Sigma$. In terms of the complex coordinate $z$ one gets $\re X^{0,1}=0$ along $\partial \Sigma$. Therefore, we need to study the solvability of the problem
\begin{gather}\label{problem}
\begin{cases}
D^{1,0}\tilde X^{0,1}=-(\nabla^{1,0} \tilde\xi)^\top~\text{in $\widetilde\Sigma$},\\
\re \tilde X^{0,1}=0~\text{on $\partial\widetilde\Sigma$},\\
\iota_*\tilde X^{0,1}=\tilde X^{0,1}.
\end{cases}
\end{gather}
Consider the operator $D^{1,0}\colon \Gamma_{\im,\iota}(T^{0,1}\widetilde\Sigma) \to\Gamma_\iota  (T^{0,1}\widetilde\Sigma\otimes \Lambda^{1,0}\widetilde\Sigma)$, where $\iota$ in the subscript denotes the $\iota-$ invariant sections and $\im$ in the subscript denotes sections which are pure imaginary on $\partial\widetilde\Sigma$. By the Fredholm alternative problem~\eqref{problem} is solvable if and only if $(\nabla^{1,0} \tilde\xi)^\top$ is $L^2-$orthogonal to $\Ker (D^{1,0})^*$, where $(D^{1,0})^*$ is the $L^2-$adjoint operator to $D^{1,0}$. The integration by parts yields that 
$$
(D^{0,1})^*\colon \Gamma_{\re,\iota}  (T^{0,1}\widetilde\Sigma\otimes \Lambda^{1,0}\widetilde\Sigma) \to \Gamma_\iota(T^{0,1}\widetilde\Sigma).
$$
Here $\Gamma_{\re,\iota}  (T^{0,1}\widetilde\Sigma\otimes \Lambda^{1,0}\widetilde\Sigma)$ denotes the $\iota-$invariant sections of $T^{0,1}\widetilde\Sigma\otimes \Lambda^{1,0}\widetilde\Sigma$ which are pure real on $\partial\widetilde\Sigma$. Further, by the computation on page 227 of~\cite{wells1980differential} one has $(D^{1,0})^*=-\ast\bar\partial\ast$, where $\ast$ is the Hodge star operator. Moreover, it is easy to see that $\ast\omega=-i\omega,~\forall \omega\in \Gamma(T^{0,1}\widetilde\Sigma\otimes \Lambda^{1,0}\widetilde\Sigma)$. Hence, $\Ker(D^{0,1})^*=H^0_{\iota, \re}(T^{0,1}\widetilde\Sigma\otimes \Lambda^{1,0}\widetilde\Sigma)$ that is the space of $\iota-$invariant holomorphic sections of $T^{0,1}\widetilde\Sigma\otimes \Lambda^{1,0}\widetilde\Sigma$ which are pure real on $\partial\widetilde\Sigma$. Using the Hermitian metric the bundle $T^{0,1}\widetilde\Sigma\otimes \Lambda^{1,0}\widetilde\Sigma$ can be identified with the bundle $(T^{1,0}\widetilde\Sigma)^*\otimes \Lambda^{1,0}\widetilde\Sigma$ and the space $H^0_{\iota, \re}(T^{0,1}\widetilde\Sigma\otimes \Lambda^{1,0}\widetilde\Sigma)$ can be identified with the \textit{space of $\iota-$invariant holomorphic quadratic differentials taking real values on $\partial\widetilde\Sigma$}. Then $\dim H^0_{\iota, \re}(T^{0,1}\widetilde\Sigma\otimes \Lambda^{1,0}\widetilde\Sigma)=\dim \mathcal M(\Sigma)$ (see for instance~\cite[Section 2]{jost1993minimal}). Hence, problem~\eqref{problem} is solvable if and only if 

 $$
((\nabla^{0,1} \tilde\xi)^\top,W)_{L^2}=0,~\forall W\in H^0_{\iota, \re}(T^{1,0}\widetilde\Sigma\otimes \Lambda^{0,1}\widetilde\Sigma).
$$
 Then we take the space 
 $$
 \{\xi\in V~|~((\nabla^{0,1} \tilde\xi)^\top,W)_{L^2}=0,~\forall W\in H^0_{\iota, \re}(T^{1,0}\widetilde\Sigma\otimes \Lambda^{0,1}\widetilde\Sigma)\}
 $$ 
 as the desired space $U$. Clearly, $\dim U \geqslant \dim V-\dim H^0_{\iota, \re}(T^{1,0}\widetilde\Sigma\otimes \Lambda^{0,1}\widetilde\Sigma) $. As as a result one gets
 $$
 \Ind_E(\Sigma) \geqslant \dim U\geqslant \dim V-\dim \mathcal M(\Sigma)=\Ind(\Sigma)-\dim \mathcal M(\Sigma).
 $$

 \end{proof}

\bibliography{mybib}

\begin{thebibliography}{FNTY20}

\bibitem[ACS18]{ambrozio2018index}
L.~Ambrozio, A.~Carlotto, and B.~Sharp.
\newblock Index estimates for free boundary minimal hypersurfaces.
\newblock {\em Mathematische Annalen}, 370(3):1063--1078, 2018.

\bibitem[Dev19]{devyver2019index}
B.~Devyver.
\newblock Index of the critical catenoid.
\newblock {\em Geometriae Dedicata}, 199(1):355--371, 2019.

\bibitem[FNTY20]{fraser2020geometric}
A.~Fraser, A.~Neves, P.~M. Topping, and P.~C. Yang.
\newblock {\em Geometric Analysis}.
\newblock Springer, 2020.

\bibitem[Fra07]{fraser2007index}
A.~Fraser.
\newblock Index estimates for minimal surfaces and $k-$convexity.
\newblock {\em Proceedings of the American Mathematical Society},
  135(11):3733--3744, 2007.

\bibitem[FS11]{fraser2011first}
A.~Fraser and R.~Schoen.
\newblock The first {S}teklov eigenvalue, conformal geometry, and minimal
  surfaces.
\newblock {\em Advances in Mathematics}, 226(5):4011--4030, 2011.

\bibitem[FS15]{fraser2015uniqueness}
A.~Fraser and R.~Schoen.
\newblock Uniqueness theorems for free boundary minimal disks in space forms.
\newblock {\em International Mathematics Research Notices},
  2015(17):8268--8274, 2015.

\bibitem[FS16]{fraser2016sharp}
A.~Fraser and R.~Schoen.
\newblock Sharp eigenvalue bounds and minimal surfaces in the ball.
\newblock {\em Inventiones mathematicae}, 203(3):823--890, 2016.

\bibitem[FS21]{fraser2021existence}
A.~Fraser and P.~Sargent.
\newblock Existence and classification of $\mathbb{S}^1-$invariant free
  boundary minimal annuli and m{\"o}bius bands in $\mathbb{B}^n$.
\newblock {\em The Journal of Geometric Analysis}, 31(3):2703--2725, 2021.

\bibitem[FTY15]{fan2015extremal}
X.-Q. Fan, L.-F. Tam, and C.~Yu.
\newblock Extremal problems for {S}teklov eigenvalues on annuli.
\newblock {\em Calculus of Variations and Partial Differential Equations},
  54(1):1043--1059, 2015.

\bibitem[GP17]{girouard2017spectral}
A.~Girouard and I.~Polterovich.
\newblock Spectral geometry of the {S}teklov problem (survey article).
\newblock {\em Journal of Spectral Theory}, 7(2):321--360, 2017.

\bibitem[Jos93]{jost1993minimal}
J.~Jost.
\newblock {\em Minimal surfaces and {T}eichmuller theory}.
\newblock Universit{\"a}t Bonn. SFB 256. Nichtlineare Partielle
  Differentialgleichungen, 1993.

\bibitem[Kar21]{karpukhin2021index}
M.~Karpukhin.
\newblock Index of minimal spheres and isoperimetric eigenvalue inequalities.
\newblock {\em Inventiones mathematicae}, 223(1):335--377, 2021.

\bibitem[KM20]{kusner2020free}
R.~Kusner and P.~McGrath.
\newblock On free boundary minimal annuli embedded in the unit ball.
\newblock {\em arXiv preprint arXiv:2011.06884}, 2020.

\bibitem[KM21]{karpukhin2021laplace}
M.~Karpukhin and A.~M{\'e}tras.
\newblock Laplace and {S}teklov extremal metrics via $ n $-harmonic maps.
\newblock {\em arXiv preprint arXiv:2103.15204}, 2021.

\bibitem[KNPS21]{karpukhin2021stability}
M.~Karpukhin, M.~Nahon, I.~Polterovich, and D.~Stern.
\newblock Stability of isoperimetric inequalities for {L}aplace eigenvalues on
  surfaces.
\newblock {\em arXiv preprint arXiv:2106.15043}, 2021.

\bibitem[KW18]{kusner2018index}
R.~Kusner and P.~Wang.
\newblock On the index of minimal 2-tori in the 4-sphere.
\newblock {\em arXiv preprint arXiv:1803.01615}, 2018.

\bibitem[Li19]{li2019free}
M.~Li.
\newblock Free boundary minimal surfaces in the unit ball: recent advances and
  open questions.
\newblock {\em Proceedings of the First Annual Meeting of the ICCM}, 2019.

\bibitem[Lim17]{lima2017bounds}
V.~Lima.
\newblock Bounds for the {M}orse index of free boundary minimal surfaces.
\newblock {\em arXiv preprint arXiv:1710.10971}, 2017.

\bibitem[McG18]{mcgrath2016characterization}
P.~McGrath.
\newblock A characterization of the critical catenoid.
\newblock {\em Indiana Univ. Math. J}, 67(2):889--897, 2018.

\bibitem[MR91]{montiel1991schrodinger}
S.~Montiel and A.~Ros.
\newblock Schr{\"o}dinger operators associated to a holomorphic map.
\newblock In {\em Global differential geometry and global analysis}, pages
  147--174. Springer, 1991.

\bibitem[Nit85]{nitsche1985stationary}
J.~C.~C. Nitsche.
\newblock Stationary partitioning of convex bodies.
\newblock {\em Archive for rational mechanics and analysis}, 89(1):1--19, 1985.

\bibitem[Pen13]{penskoi2013extremal}
A.~V. Penskoi.
\newblock Extremal metrics for eigenvalues of the {L}aplace-{B}eltrami operator
  on surfaces.
\newblock {\em Russian Mathematical Surveys}, 68(6):107--168, 2013.

\bibitem[Pen19]{penskoi}
A.~V. Penskoi.
\newblock Isoperimetric inequalities for higher eigenvalues of the
  {L}aplace-{B}eltrami operator on surfaces.
\newblock {\em Proceedings of the Steklov Institute of Mathematics},
  305(1):270--286, 2019.

\bibitem[Sar17]{sargent2017index}
P.~Sargent.
\newblock Index bounds for free boundary minimal surfaces of convex bodies.
\newblock {\em Proceedings of the American Mathematical Society},
  145(6):2467--2480, 2017.

\bibitem[SSTZ17]{smith2017morse}
G.~Smith, A.~Stern, H.~Tran, and D.~Zhou.
\newblock On the {M}orse index of higher-dimensional free boundary minimal
  catenoids.
\newblock {\em arXiv preprint arXiv:1709.00977}, 2017.

\bibitem[SZ19]{smith2019morse}
G.~Smith and D.~Zhou.
\newblock The {M}orse index of the critical catenoid.
\newblock {\em Geometriae Dedicata}, 201(1):13--19, 2019.

\bibitem[Tra20]{tran2016index}
H.~Tran.
\newblock Index characterization for free boundary minimal surfaces.
\newblock {\em Comm. Anal. Geom.}, 28(1):189--222, 2020.

\bibitem[WGP80]{wells1980differential}
R.~Wells and O.~Garc{\'\i}a-Prada.
\newblock {\em Differential analysis on complex manifolds}, volume 21980.
\newblock Springer New York, 1980.

\end{thebibliography}
\bibliographystyle{alpha}

\end{document}